\newcommand{\etam}{\underline{\eta}}
\DeclareMathOperator{\Leb}{Leb}
\DeclareMathOperator{\diam}{diam}
\newcommand{\n}{\mathbbm{n}}
\newcommand{\cL}{\mathcal{L}}
\newcommand{\PP}{\mathbb P}
\newcommand{\bo}{\boldsymbol}
\newcommand{\N}{\mathbb N}
\newcommand{\R}{\mathbb R}
\newcommand{\Z}{\mathbb Z}
\newcommand{\mc}{\mathcal}
\newcommand{\optionalitemlabel}[2]{%
  \phantomsection
  #1\protected@edef\@currentlabel{#1}\label{#2}%
}
\newtheorem*{theorem*}{Theorem}
\newtheorem{theorem}{Theorem}[section]
\newtheorem{proposition}[theorem]{Proposition}
\newtheorem{corollary}[theorem]{Corollary}
\newcommand{\B}{\mathcal B}
\newtheorem{remark}[theorem]{Remark}
\newtheorem{lemma}[theorem]{Lemma}
\newtheorem{definition}{Definition}[section]
\title{ Filtering and Statistical Properties of Unimodal Maps Perturbed by Heteroscedastic Noises}
\author{Fabrizio~Lillo\thanks{Dipartimento di Matematica, Universit\'a di Bologna and Scuola Normale Superiore, Pisa, Italy. Email address: fabrizio.lillo@unibo.it.} 
\and Stefano~Marmi\thanks{Scuola Normale Superiore, Pisa, Italy. Email address: stefano.marmi@sns.it.} 
\and Matteo Tanzi\thanks{King's College London. Email address: matteo.tanzi@kcl.ac.uk.}
\and Sandro~Vaienti\thanks{Aix Marseille Universit\'e, Universit\'e de Toulon, CNRS, CPT, 13009 Marseille, France. Email address: vaienti@cpt.univ-mrs.fr.}}
\date{\today}
\begin{document}
\maketitle
\begin{abstract}
We propose a theory of unimodal maps perturbed by a heteroscedastic Markov chain  noise
 and experiencing  another  heteroscedastic noise due to uncertain observation. We address and treat the filtering problem  showing that by collecting more and more observations, one would predict the same distribution for the state
of the underlying Markov chain no matter one’s initial guess. Moreover we   give other limit theorems, emphasizing in particular concentration inequalities and extreme value and Poisson distributions. Our results apply  to a   family of maps arising from a model of systemic risk in finance.
\end{abstract}

{Keywords: {Random systems; filtering; extreme values.}}

{{AMS Subject Classification: 37-XX, 37HXX, 60G35}}

\section{Introduction}
The measurement of any quantity is plagued by an observational error that prevents knowing with infinite precision the true value of the variable of interest, which therefore remains latent. In some fields, as for example climate or finance, this measurement error can be very large and the observational error distribution, including its bias and variance, are unknown and not constant in time. In time series analysis, the procedure to obtain an estimate of the true latent process and to provide accurate predictions in the presence of observational noise is termed {\it filtering}. The seminal Kalman filter provides an optimal filtering procedure when the dynamics of the latent process is linear and the noise driving the latent dynamics and the observational noise are Gaussian distributed. 
In many systems, however, the dynamic is strongly non-linear and therefore approximated methods, such as the extended Kalman filter or the particle filter approach, provide solutions that can be used in practice.
From the theoretical perspective, however, the properties of filtering of highly non-linear dynamics are still not fully characterized. In particular, we are interested here in unimodal maps perturbed by a heteroscedastic\footnote{We remind that in an heteroscedastic process the variance is not constant and depends on time either explicitly or via the dynamical variables.} which are observed with an heteroscedastic observational noise. Already without including the observational noise, the theory of unimodal maps perturbed by heteroscedastic noise is quite involved.
In our previous papers \cite{lillo21,lillo22} we
considered a random transformation built upon unimodal maps $T$ defined on the unit interval and perturbed with heteroscedastic noise that arises in some financial models of a bank leverage (as we recall in section \ref{rrr}). Our random perturbation reduced to a Markov chain with a kernel not necessarily strictly positive, and to prove the existence and uniqueness of the stationary measure we had to exploit the topological properties of the map $T$. We were thus able to  show the weak convergence of the unique stationary measure to the invariant measure of the map in the noiseless limit. This step was again 
not trivial because of the singularity of  the stochastic kernel. Third, we
introduced the average Lyapunov exponent by integrating the logarithm of the derivative of the
map $T$ with respect to the stationary measure and showed that the average Lyapunov exponent
depends continuously on the Markov chain parameters. 

To achieve the above, we 
explicitly constructed a sequence of random transformations close to $T$, which allowed us to
replace the deterministic orbit of $T$ with a random orbit.  Representing the Markov chain in terms of
random transformation enabled us to state and prove some important limit theorems, such
as the Central Limit Theorem, the large deviation principle, and the Berry-Ess\'een inequality. Fourth, for the class of unimodal maps $T$ of the chaotic type, we performed a multifractal
analysis for the invariant and the stationary measures. Finally, we developed an Extreme Value
Theory (EVT, henceforth) for our Markov chain and we  proved Gumbel’s law for the Markov chain with an extreme index equal to $1$.\\

We now continue that investigation of this model by adding another ingredient often present in applications and that we have previously neglected: observational noise. It is in fact expected that any observations will have a small uncertainty to the actual state of the Markov chain. We assume that also the observational noise is heteroscedastic, since its intensity is expected to depend on the position of the Markov chain. \\ 

The importance of the observational noise to establish statistical properties of dynamical systems was already emphasized in the paper \cite{FV} by one of us. We quote in that paper what  Lalley and Noble wrote in \cite{LN}:\\
{\em In this model our observations take the form $y_i = T^ix+\eta_i,$
where $\eta_i$ are independent, mean zero random vectors. In contrast
with the dynamical noise model (e.g.; the random transformations), the noise does not interact with the dynamics: the deterministic character of the system, and its long range dependence,
are preserved beneath the noise. Due in part to this dependence,
estimation in the observational noise model has not been broadly
addressed by statisticians, though the model captures important
features of many experimental situations.}\\


We also wrote in \cite {FV} that  a system contaminated by the observational noise raises
the natural and practical question whether it would be possible
to recover the original signal, in our case the deterministic orbit $T^ix, i\ge 1.$ This question is amplified in the current paper to recover the evolution of a Markov chain, arising as the perturbation of a deterministic system perturbed with additive heteroscedastic noise. In this regard we consider our result on filtering particularly relevant. In filtering, one usually needs to assume a certain prior for the initial state. Below we show that, in the system at hand, by collecting more and more {\em observations}, one would predict the
same distribution for the state of the underlying Markov chain no matter one’s initial guess.\\

In the last years a few techniques have been proposed
for such a noise reduction \cite{sei}: we remind here the remarkable
Schreiber–Lalley method \cite{sette,otto,nove,dieci}, which provides a very consistent
algorithm to perform the noise reduction when the underlying
deterministic dynamical system has strong hyperbolic properties.\\

{\bf Plan of the paper.} In the rest of this introduction we recall the construction of the Markov chain, deferring to section 2 for the question of {\em filtering};  Section 3 will show that limit theorems continue to hold in the presence of the observational noise, and we will add to them the concentration inequalities described in section 4. We will finally end with a few recurrence results, like extreme value theory and   and Poisson statistics. 
\subsection{Random transformations} 
 We start with a short reminder on random transformations. Let $(\Omega, \mathcal{T}, \mathbb{P})$ be a probability space and consider the countable product space $(\tilde{\Omega}, \tilde{\mathcal{T}}, \tilde{\mathbb{P}}),$  where $\tilde{\Omega}=\Omega^{\mathbb{N}}, \tilde{\mathcal{T}}=\mathcal{T}^{\otimes \mathbb{N}}, \tilde{\mathbb{P}}=\mathbb{P}^{\otimes \mathbb{N}}.$ Put $S$ the one sided shift over $\tilde{\Omega}.$ Then to  each $\eta\in \Omega$ we associate a map $T_{\eta}$ on the measurable space $(I, \mathcal{A})$ such that $(\eta, x)\rightarrow T_{\eta}(x)$ is measurable and we define $T_{\etam}=T_{\eta_0}$ for each $\etam\in \tilde{\Omega},$ with $\etam=(\eta_0, \eta_1, \dots).$ We then define the random orbit $$T^n_{\etam}:=T_{S^{n-1}(\etam)}\circ \cdots \circ T_{\etam}(x)=T_{\eta_{n-1}}\circ\cdots\circ T_{\eta_0}.$$
If we now consider the skew-product transformation $F:\tilde{\Omega}\times I\rightarrow \tilde{\Omega}\times I$ by $F(\etam, x)=(S\etam, T_{\etam}x),$ we say that a probability measure $\mu_s$ on $(I, \mathcal{A})$ is {\em stationary} if $\tilde{\mathbb{P}}\otimes\mu_s$ is $F$-invariant. It is not difficult to show that $\mu_s$ is stationary iff $\mu_s(A)=\int_{\Omega}\mu_s(T^{-1}_{\eta}(A))d\mathbb{P}(\eta),$ for each $A\in \mathcal{A}.$ If we set
\begin{eqnarray}\label{mmmkkk}
X_{n+1}=T_{\eta_n}(X_n),
\end{eqnarray}
with $X_0$ having a certain distribution, 
this defines a homogeneous Markov chain with state space $(I, \mathcal{A})$ and transition operator
\begin{equation}\label{TO}
K(x,A)=\mathbb{P}(\{\eta: \,T_{\eta}(x)\in A\}),
\end{equation}
for any $x\in I, A\in \mathcal{A}.$ We now suppose that $I$ is a subset of $\mathbb{R}$ with $\mathcal{A}$ the Borel $\sigma-$algebra. Moreover $(I, \mathcal{A})$ will be  endowed with the probability Lebesgue measure $m$\footnote{In the following we will use the shorthand $dm=dx$ and  if it is not otherwise specified, all the $L^p$ spaces are taken with respect
to $m$ and the integrals with respect to such a measure are defined on I.} such that each transformation $T_{\eta}$ is non-singular w.r.t. $m.$
We remind that the transfer operator associated to the map $T_{\eta}$ is defined by the duality relationship $\int\mathcal{L}_{\eta}f\,v\,dx=\int f \ v\circ T_{\eta}\,dx$, where $f \in L^{1}$ and $v \in L^{\infty}$. 

 We then define the averaged operator: for $f\in L^1$
\begin{equation}\label{MMO}
    \mathcal{L}f(x) = \int (\mathcal{L}_{\eta}f)(x)\,d\PP(\eta).
\end{equation}
An absolutely continuous (with respect to $m$) probability measure is stationary iff its density is a fixed point of $\cL.$ We also remind that if the measure given by (\ref{TO}), $K(x, \cdot),$ has a density (Markov kernel), $p(x,\cdot),$ then we also have
\begin{equation}\label{MK}
 \mathcal{L}f(x)=\int f(z) p(z,x)dz.
\end{equation}
This will reveal particularly useful in the next section, where the Markov kernel is explicitly computed. Notice that (\ref{MMO}) and  the duality relationship for the single transfer operators allow us to get a similar relationship for the operator $\cL,$ namely
$$
\int\mathcal{L}f\,v\,dx=\int f\ 
\mathcal{U}v \ dx$$ where 
\begin{equation*}
    (\mathcal{U}v)(x) = \int v(T_{\eta}(x))d\PP(\eta),
\end{equation*}
with $f \in L^1, v \in L^{\infty}.
$
We will assume that $\cL$ has good spectral properties on some Banach space of functions $(\B, \|\cdot\|);$ in particular we will choose  $\B$ as  the space of functions $f$ with total variation $\text{var}(f)<\infty,$ by first setting $\text{Var}(f):=\inf\{\text{var}(v); f=v, m\ a.e.\}$ and then posing
$$
\|f\|_{\B}=\text{Var}(f)+\|f\|_{L^1}.
$$
\subsection{The first heteroscedastic noise}\label{tre}
 The system which we quoted in the Introduction was given by random transformations of the form $T^n_{\etam}:=T_{\eta_{n-1}}\circ\cdots\circ T_{\eta_0},$ where 
 \begin{equation}\label{rrr}
T_{\eta}(x)=T(x)+\sigma(x)\eta
\end{equation}
 and $\eta$ is a  random variable  defined on some probability space $(\Omega, \mathbb{P})$, with values on $\mathbb{R}$ nd with probability distribution $d\Theta(x)=g(x)dx.$ 
We now  better detail the choices of $T, \sigma$ and $\Theta.$ First of all, the map $T$ was chosen as a unimodal map\footnote{See \cite{thunberg2001periodicity} H. Thunberg, for a good review on unimodal maps.} verifying the following conditions
\begin{enumerate}
    \item $T$ is  continuous on the unit interval $I\overset{\text{def}}{=}[0,1]$ with a unique maximum at the point $\mathfrak{c}$ such that $\Upsilon \overset{\text{def}}{=} T(\mathfrak{c}) < 1$.
    \item  There exists a closed interval $[d_1, d_2] \subset I$ which is forward invariant for the map and upon which $T$ and all its power $T^{n},\,n\in\mathbb{N}_{\geq 1}$ are topologically transitive
    \item   $T$ preserves a unique Borel probability measure $\mu$, which is absolute continuous with respect to the Lebesgue measure. 
  \end{enumerate}  
  {\bf Assumptions TM}\\
  In the rest of the paper we will keep the  additional assumptions described below. They are   very precise and one could wonder about the generality of
our achievements. First of all such requirements allow us to get rigorous results; secondly, we will present
in the Appendix a model arising in finance which fits with assumptions TM. We will return on that in the remark below.\\

 We first  introduce $
    \Gamma \overset{\text{def}}{=} 1 - \Upsilon,$
\textit{i.e.}, the gap between $T(\mathfrak{c})$ and $1$. 
Second, we define a positive constant $a$ satisfying 
\begin{equation}\label{eq:boundona2}
    a \leq \frac{1}{\sigma_{\text{max}}}\min\left\{\frac{\Gamma}{2}, \frac{q}{2}, \frac{1}{2}T\left(1-\frac{\Gamma}{2}\right)\right\},
\end{equation}
where $\sigma_{\max} \overset{\text{def}}{=} \max_{x \in [-\Gamma, 1]}\sigma(x)$, and the positive constant $q$ is the  intercept of the map $T$ at zero, $T(0)=q.$ 
The function $\sigma$ is a non-negative and  differentiable function for $x\in \mathbb{R}.$ 
We then take the density $g$ as
\begin{equation}\label{eq:gaussiankernel}
   g(x)\overset{\text{def}}{=} c_{a} \chi_{a}(x)e^{-\frac{x^2}{2}},\quad x \in \mathbb{R},
\end{equation}
where $\chi$ is a $C^{\infty}$ bump function and
\begin{equation*}
    c_a = \left(\int \chi_{a}(x)e^{-\frac{x^2}{2}}\,dx\right)^{-1}.
\end{equation*}
 Since the noise varies in a neighborhood of $0$, we need to enlarge the domain of definition of the map $T$ to take into account the action of the noise. More precisely, we extend the domain of $T$ to the larger interval $\tilde{I} \overset{\text{def}}{=} [-\Gamma, 1]$. On the interval $[-\Gamma,0]$, $T$ has a  $C^4$ extension and is 
  decreasing with $T(-\Gamma) < \Delta.$ 
 With abuse of language and notation, we will continue to call $T$ the map after its redefinition, and we put $I = \tilde{I}$. We  proved in \cite{lillo22} that the random  orbit $T^n_{\etam}$ is confined in the interval $[-\Gamma/2, 1-\Gamma/2]$ for any $n$ and  $\etam.$ Moreover there exists only one absolutely continuous stationary measure $\mu_s $
with the support verifying $\text{supp}(\mu_s) \subset I_{\Gamma}$, where 
\begin{equation}\label{eq::supportone}
    I_{\Gamma}\overset{def}{=}\left[\frac{1}{2}T\left(1-\frac{\Gamma}{2}\right), 1-\frac{\Gamma}{2}\right].
    \end{equation}
  The particular form of the noise allows us to write the transfer operator as in (\ref{MK}), $
 \mathcal{L}f(z)=\int f(x) p(x,z)dx,$
with the explicit  Markov kernel:
\begin{equation}\label{MMKK}
p(x,z):=\frac{1}{\sigma(x)}g\left(\frac{z-T(x)}{\sigma(x)}\right)=\frac{1}{\sigma(x)}c_a\chi_a\left(\frac{z-T(x)}{\sigma(x)}\right)e^{-\frac{(z-T(x))^2}{2\sigma^2(x)}}
\end{equation}
    The above considerations imply that the subspace $\mathcal{S}_{I_{\Gamma}}:=\{h \in L^{1}\,:\,\text{supp}(h) \subset I_{\Gamma}\}$  is $\mathcal{L}$-invariant, and that the stochastic kernel $p(x,z)$  has total variation of order $\frac{1}{\sigma(x)}$ for any $z\in I_{\Gamma};$ then $\text{Var}(|p(z,\cdot)|)\in L^{\infty}.$
In this case and writing $C_v$ for the essential upper bound of $\text{Var}(|p(z,\cdot)|)$, it is easy to show that, see Lemma B1 in \cite{lillo21},
$$
\|\cL(f)\|_{\B}\le (C_v+1)\|f\|_{L^1}.
$$
This implies that the operator $\cL$ is compact\footnote{The argument is the following. Since  $\B$ is compactly embedded in $L^1,$  for any   sequence $f_n$ in $\B$ such that $\|f_n\|_{\B}\le 1$, there exists a subsequence $f_{n_k}$ and $g\in \B$ such that $\|f_{n_k}-g\|_{L^1}\rightarrow 0.$
To prove  compactness of $\cL$ it will be enough to show that 
 given the sequence $f_n$  with $\|f_n\|_{\B}\le 1$, then the sequence  $\{\cL(f_n)\}_n$ contains a subsequence converging in $\B.$
But $\|\cL(f_n)\|_{\B}\le (C_v+1)\|f_n\|_{L^1}.$
Passing to the subsequence identified above,  we get 
$\|\cL(f_{n_k})-\cL(g)\|_{\B}=\|\cL(f_{n_k}-g)\|_{\B}\le (C_v+1) \|\cL(f_{n_k}-g)\|_{L^1}\le (C_v+1) \|f_{n_k}-g\|_{L^1}\rightarrow 0.$},
with $1$  as a simple eigenvalue by the uniqueness of the stationary measure.        \begin{remark}
 All the limit theorems  proved in the next sections deeply rely on the compactness of the Markov operator $\mathcal{L};$  actually the {\em quasi-compactness}, see \cite{HH} for the precise definition, of such operator would be enough. In both cases the operator will have a spectral gap insuring exponential decay of correlations for observables in $\mathcal{B}.$ To get such a property and the simplicity of the largest eigenvalue which implies the uniquness of the stationary measure,  we should combine the choice of unimodal maps satisfying the condition 3 above (which qualifies them as {\em chaotic}, see \cite{thunberg2001periodicity}), with  rather general additive noises modulated by the position of the point. That makes delicate  the spectral analysis of $\mathcal{L}$ and one needs sufficient assumptions to achieve it. We believe that large  generalizations are possible following the scheme proposed in this paper.
\end{remark}

\section{The observational noise and the second heteroscedastic perturbation: filtering}\label{filll}
Suppose at time $n$ the system is the configuration $T^n_{\etam}:=T_{\eta_{n-1}}\circ\cdots\circ T_{\eta_0};$ at this point we add another heteroscedastic noise constructed by multiplying a function $s(T_{\eta_n} \circ\ldots\circ T_{\eta_1}(x))$ with an i.i.d. random process  $\{\epsilon_n\}_{n\ge 1}$ defined on the probability space $(\Omega, \mathbb{P})$ with values in $[-1,1]$ independent of the $\eta_k$ and with distribution $\psi.$ We will suppose that   $s:I\rightarrow I$ is a strictly positive Lipschitz function. Put:
\begin{equation}\label{rfu}
Z_n=T^n_{\etam}+s(T^n_{\etam})\epsilon_n.
\end{equation}
We will also suppose that the magnitude of the observational noise  is small enough to leave the value of $Z_n, n>0$ in the interval $I.$ By referring to section \ref{tre} it will  be enough to take
\begin{equation}\label{56}
\sup_{x\in I, \epsilon\in [-1,1]}|s(x)\epsilon|<\Gamma/2.
\end{equation}
 We call $s$ the {\em modulation} term: other assumptions on this term will be added in the next sections.
The variable $Z_n$ defines a stationary process on the probability space $\left((I\times \tilde{\Omega}\times \tilde{\Omega}), (\mu\times \tilde{\mathbb{P}}\times \tilde{\mathbb{P}})\right),$ with values in $I$ and distribution
$
\rho_{Z}
$ satisfying, for any Borel set $A\subset I$ and denoting $\text{id}$ the identity function,
\begin{eqnarray}
\rho_{Z}(A)=\iiint 1_A(T^n_{\etam}(x)+sT^n_{\etam}((x))\epsilon_n)h(x)d\tilde{\mathbb{P}}(\etam)dx d\tilde{\mathbb{P}}(\underline{\epsilon})=\\
\iint \mathcal{U}^n(1_A\circ (\text{id}+s(\text{id})\epsilon)) (x)h(x)dxd\psi(\epsilon)=
\iint 1_A(x+s(x)\epsilon)\ \mathcal{L}^n(h)(x)dxd\psi(\epsilon)=\\
\int d\mu\int 1_A(x+s(x)\epsilon)d\psi(\epsilon)=\int \psi\left(\frac{A-x}{s(x)}\right)d\mu(x),
\end{eqnarray}
where $h\in \mathcal{S}_{I_{\Gamma}}$ is the density of $\mu.$ In the following we will set
$$
\mathfrak{S}:=I\times \tilde{\Omega}\times \tilde{\Omega}; \ \mathfrak{P}:=\mu\times \tilde{\mathbb{P}}\times \tilde{\mathbb{P}},
$$
and $\rho_Z$ will be the common distribution of  $Z_n.$\\
\begin{remark}\label{Rem:ChoiceOfP}
    It is important to point out that the choice of $\mathfrak{P}$ as the underlying probability to study processes like $\{\phi(Z_n)\}_n, \phi \in \mathcal{B}$ and the consequent stationarity of the distribution $\rho_Z,$ allow us to get the limit theorems of Sections \ref{LLTT}, \ref{EEMM}, \ref{EEVVTT}, \ref{PPSS}. See also remark \ref{staz} for the stationarity in the case of filtering.
\end{remark}
We now observe that  $Z_n$ is the sum of the Markov chain $X_n$ defined in (\ref{mmmkkk}), plus the noise $s(X_n)\epsilon_n.$  If we equip the chain with the initial distribution $\hat{w},$ then the distribution $\rho_{Z_n}$ of $Z_n$ will satisfy
$\rho_{Z_n}(A)=\iint 1_A(X_n+s(X_n)\epsilon)d\mathbb{P}_{\hat{w}}d\psi(\epsilon),$ which is independent of $n$ and equal to $\rho_Z$ above when $\hat{w}=\mu.$\\

We say that $\Pi_n$ is a {\em filtering process}  if it is a sequence of  conditional probability distributions  on $I$ such that, $\mathbb{P}$-almost surely, for $n\ge 0$
$$
\Pi_n(B):=\mathbb{P}(X_n\in B|Z_0,Z_1,\dots, Z_n), \forall B\in \mathcal{B}.
$$
This is a conditional probability and therefore a random variable defined as the conditional expectation\footnote{We write $\mathbb{E}_{\mathbb{P}}$ for expectation with respect to the probability measure $\mathbb{P}$. When no ambiguity arises, we may omit the subscript.}
$$
\mathbb{P}(X_n\in B|Z_1,\dots, Z_n)=\mathbb{E}_{\mathbb{P}}(1_B(X_n)|Z_0,Z_1,\dots. Z_n).
$$
On the other hand, the application $B\rightarrow \mathbb{P}(X_n\in B|Z_1,\dots, Z_n)$ defines a Borel measure on $I.$
We will assume that \footnote{As it is written in \cite{CH} ``the
deterministic choice (\ref{ggg}) is only a matter of convenience; it means that all the
information about $X_0$ is contained in its a priori distribution $\hat{w}$".}   
\begin{equation}\label{ggg}
\mathbb{P}(Z_0=0)=1
\end{equation}
and 
$$
\Pi_0(B)=\hat{w}(B)=\mathbb{P}(X_0\in B),
$$
where $\hat{w}$ is the initial distribution of the Markov chain $X_n$ which we will suppose having density $w\in \mathcal{B}.$ 
The problem of calculating $\Pi_n$ is called nonlinear filtering and the  the filtering process
can be calculated in an iterative fashion.  To achieve this, we first assume a homogeneous memoryless channel. This means
\begin{equation}\label{ce}
\mathbb{P}(Z_1\in B_1,\dots,Z_n\in B_n|X_1,\dots,X_n)=\prod_{k=1}^n\mathbb{P}(Z_k\in B_k|X_k).
\end{equation}
If we assume, as we did, that the $\epsilon_j$ are i.i.d. and also independent of the $X_j,$ it is easy to show that (\ref{ce}) holds in our case. Moreover we have explicitly
$$
\mathbb{P}(Z_k\in A|X_k)=\int 1_{A}(X_k+s(X_k)\epsilon)d\psi(\epsilon),
$$
and it is also very easy to check that the distribution of the random variable $\mathbb{P}(Z_k\in A|X_k)$ does not depend upon $k$ (use the stationarity of the Markov chain). \\


If we now assume that $\psi$ has density $\psi'$ and support contained in the interval $[-\epsilon, \epsilon]$, we have, for any $k:$
$$
\mathbb{P}(Z_k\in A|X_k)=\int_A g(y, X_k)dy,
$$
where
$$
g(y,x)=\frac{1}{s(x)}\psi'\left(\frac{y-x}{s(x)}\right) 1_{[x-s(x)\epsilon,x+ s(x)\epsilon]}(y) .
$$
This is called the {\em likelihood function} in \cite{BDM}. Then by Proposition 3.2.5 in \cite{CMR} we have that:
\begin{eqnarray}
\Pi_n(\phi)=\frac{\int_I\phi(x)g(Z_n,x)d\Pi^{+}_{n-1}(x)}{\int_Ig(Z_n,x)d\Pi^{+}_{n-1}(x)}\label{Eq:Filt1}\\
\Pi^{+}_n(\phi)=\int K(z,\phi)\Pi_n(dz),\label{Eq:Filt2}
\end{eqnarray}
where $\phi$ is a measurable bounded function and $K$ is the Markov kernel of $X_k.$ 
\subsection{Filtering via Random Cones}
In this subsection we are going to prove that under suitable conditions, the filtering problem defined by the equations \eqref{Eq:Filt1}-\eqref{Eq:Filt2} admits a unique equivariant measure and for any choice of initial prior, there is convergence to this equivariant measure. This corresponds to well-posedness of the filtering problem; in words, it means that by collecting more and more observations, one would predict the same distribution for the state of the underlying Markov chain no matter one's initial guess.

To do so, we first express the evolution in equations \eqref{Eq:Filt1}-\eqref{Eq:Filt2} in terms of a cocycle. Then we restrict the action of the cocycle to cones that depend on the fiber where they are defined, and that are therefore random. We then give sufficient conditions for the cocycle to act as a contraction with positive frequency.

\subsubsection{Definition of the cocylce}
Recall that the process $(X_n,Z_n)_{n\in \N}$ is defined on the probability space $\left(\mathfrak{S}, \mathfrak{P}\right)$ with values in $I\times I.$ We remind that  $\mu$ is the stationary measure for the chain $(X_n)_{n\in \N}$ and prescribes the distribution of $X_0$. We first notice that by the Kolmogorov Extension Theorem, we get a double sided extension for our process $(X_n,Z_n)_{n\in \Z}$ having the same transition probability and same distribution for $(X_0,Z_0)$. Pushing forward the measure $\mathfrak{P}$ under the mapping $\Omega\mapsto (X_n(\Omega),Z_n(\Omega))_{n\in \Z}$, gives a measure $\bo \mu$  on the space of double sided sequences $(\bo x,\bo z)\in (I\times I)^\Z$.  It readily follows that this measure is invariant and ergodic under the left shift $\sigma$ (compare also with Remark \ref{Rem:ChoiceOfP}).

We want to make explicit the dependence of the evolution $\Pi_{n-1}\mapsto \Pi_{n}$ on $Z_n$. Given a measure $\nu$ on $I$ (e.g.   $\Pi_{n-1}(\Omega)$) and $z\in I$ (e.g.   $Z_{n}(\Omega)=z$), define
\begin{equation}\label{Eq:Evolution}
(\mc P_z\nu)(\phi)= \frac{\int _I\phi(x)g(z,x)d\nu^+(x)}{\int _Ig(z,x)d\nu^+(x)}\quad\quad \nu^+(\phi)=\int K(x,\phi)d\nu(x).
\end{equation}
With this notation, the evolution from $\Pi_{n-1}$ to $\Pi_{n}$ in Eq. (18)-(19) can be written as $\Pi_{n}=\mc P_{Z_n}(\Pi_{n-1})$. Then, we can construct a cocycle $\mc P$ on $(I\times I)^\Z\times \mc M(I)$, with base $(I\times I)^\Z$ and fiber $\mc M(I)$  denoting the space of Borel finite measures on $I$:
\begin{equation}\label{Eq:CocycleDef}
\mc P((\bo x,\bo z), \nu)=(\sigma(\bo x,\bo z),\mc P_{(\bo x,\bo z)}\nu).
\end{equation}
where from now on we will denote $(\bo x, \bo z)_0=z_0$, $\mc P_{(\bo x,\bo z)}=\mc P_{z_0}$ and \[
\mc P^n_{(\bo x,\bo z)}=\prod_{i=0}^{n-1}\mc P_{\sigma^i(\bo x,\bo z)}=\prod_{i=0}^{n-1}\mc P_{z_i}.
\]

The strategy is to restrict the action on the fiber to equivariant $(\bo x,\bo z)$-dependent cones of functions, that will be specified below,  so that\footnote{Here will play an important role the properties of the transition probabilities of $X_n$ and the noise $\epsilon_n$. } \[
\mc P_{(\bo x,\bo z)}:(\mc V_{(\bo x,\bo z)},\Theta_{(\bo x,\bo z)})\rightarrow (\mc V_{\sigma (\bo x,\bo z)},\Theta_{\sigma (\bo x,\bo z)})\]
are  contractions with positive frequency for $\bo \mu$-a.e. $(\bo x,\bo z)$ with respect to $\Theta_{(\bo x,\bo z)}$, the Hilbert projective metric intrinsically defined on $\mc V_{(\bo x,\bo z)}$.
More precisely, we are aiming for the following 
\begin{definition}[Attracting Equivariant Measure]For almost every sequence $(\bo x,\bo z)$ there is $\hat \mu_{(\bo x,\bo z)}$ in $\mc V_{(\bo x,\bo z)}=\mc V_{z_0}$ such that
\begin{enumerate}
\item $\mc P_{z_0}\hat \mu_{(\bo x,\bo z)}=\hat \mu_{\sigma(\bo x,\bo z)}$;
\item there are $C>0$ and $\lambda\in(0,1)$ such that for all $n\in \N$
\[
 \Theta_{(\bo x,\bo z)}\left(\mc P^n_{\sigma^{-n}(\bo x,\bo z)}\nu,\hat \mu_{(\bo x,\bo z)} \right)\le C\lambda^n
\]
 for every $\nu\in \mc V_{\sigma^{-n}(\bo x,\bo z)}$.
\end{enumerate}
\end{definition}

\begin{remark}\label{staz}
 Notice that in our case the equivariant measure $\hat\mu_{(\bo x,\bo z)}$ will only depend on the $\bo z$ variable (and more precisely on its coordinates $\le 0$). We keep an artificial  dependence on $\bo x$ and construct the skew-product on fibres over $(\bo x,\bo z)$ because we have good statistical properties for the measure $\bo \mu$ for the joint variables $(\bo x, \bo z)$ -- namely we can exploit that $(X_n,Z_n)$ is a stationary ergodic process -- but not for $\bo z$. 
\end{remark}

\subsubsection{Cone Construction and Equivariant Measure}

Now we show that under some conditions on the transition kernel of the Markov chain and on the observational noise, the system admits an equivariant measure. Before that, let's gather some useful facts about the cones we are going to use.

\paragraph{Cones of positive functions}

For $c\in(0,1)$, we define 
\[
\mc V_c(J):=\left\{\phi: J\rightarrow \R^+:\,\phi\in L^1,\,\, c < \frac{\phi(x)}{\phi(y)} < c^{-1}  \quad\forall x,y\in J\right\}.
\]

Let $\mc V_0(J)$ be the cone of integrable nonnegative functions defined on $J$. 
Let's call $\Theta_{0}$ the intrinsic Hilbert metric on $\mc V_0(J)$. Notice that this cone has the characteristic that, not only its diameter with respect to the Hilbert metric is infinite, but some densities are at infinite distance from each other. Let's recall the definition of the Hilbert metric on a cone, and write an explicit expression for  $\Theta_{0}$ on $\mc V_{0}$. First of all, for any $\phi_1$, $\phi_2\in \mc V_{0}$, define
\[
\beta_{0}(\phi_1,\phi_2)=\inf\{t>0:\,t\phi_1- \phi_2\in \mc V_{0}\}.
\]
Then 
\[
\Theta_0(\phi_1,\phi_2)=\log[\beta_0(\phi_1,\phi_2)\beta_0(\phi_2,\phi_1)].
\]
Notice that if there is $x\in J$ such that $\phi_1(x)=0$, but $\phi_2(x)> 0$, then the set $\{t>0:\,t\phi_1- \phi_2\in \mc V_{0}\}$ is empty, and we use the convention that the infimum of the empty set is $+\infty$. In contrast, suppose that $\{t>0:\,t\phi_1- \phi_2\in \mc V_{0}\}$ is nonempty, then since there must be $x$ for which $\phi_2(x)\neq 0$ and $\phi_1(x)\neq 0$, $t\ge \phi_2(x)/\phi_1(x)>0$ and therefore  $\beta_{0}(\phi_1,\phi_2)>0$. We can summarize this remark by saying that for every $\phi_1,\,\phi_2\in \mc V_{0}$
\[
\beta_{0}(\phi_1,\phi_2)\in(0,+\infty]
\]
which implies  $\Theta_{0}: \mc V_{0}\times  \mc V_{0}\rightarrow[0,+\infty] $
is a Hilbert metric that can take values on the extended positive semiaxis.

The following result is standard in the case of cones with finite Hilbert metrics, we double check that it still holds for the extended metric on the  cone $\mc V_{0}$.
\begin{lemma}\label{Lem:ContHilb}
Let $\mc V$ and $\mc V'$ be  two cones such that their associated Hilbert metrics are $\Theta_*(\psi_1,\psi_2)=\log[\beta_*(\psi_1,\psi_2)\beta_*(\psi_2,\psi_1)]$ for $*\in\{\mc V,\mc V'\}$ and where
\[
\beta_*(\psi_1,\psi_2):=\inf\{t>0:\,t\phi_1- \phi_2\in *\}\in(0,+\infty] \quad\quad\forall \psi_1,\psi_2\in *.
\]
Then if $\mc L:\mc V\rightarrow \mc V'$ is a linear map, one has
\[
\Theta_{\mc V'}(\mc L\phi_1,\mc L \phi_2)\le \Theta_{\mc V}(\phi_1,\phi_2)
\]
\end{lemma}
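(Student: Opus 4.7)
The plan is to adapt the classical Birkhoff argument for linear maps between cones, paying attention only to the fact that the Hilbert metric here may take the value $+\infty$. The crucial observation is a simple set inclusion: for any $t > 0$ with $t\phi_1 - \phi_2 \in \mc V$, linearity of $\mc L$ gives $\mc L(t\phi_1 - \phi_2) = t\mc L\phi_1 - \mc L\phi_2$, and the hypothesis $\mc L(\mc V) \subseteq \mc V'$ places this element in $\mc V'$. Hence
\[
\{t > 0 : t\phi_1 - \phi_2 \in \mc V\} \subseteq \{t > 0 : t\mc L\phi_1 - \mc L\phi_2 \in \mc V'\},
\]
and taking infima yields $\beta_{\mc V'}(\mc L\phi_1, \mc L\phi_2) \le \beta_{\mc V}(\phi_1, \phi_2)$.

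Swapping the roles of $\phi_1$ and $\phi_2$ and repeating the argument gives $\beta_{\mc V'}(\mc L\phi_2, \mc L\phi_1) \le \beta_{\mc V}(\phi_2, \phi_1)$. All four $\beta$-values lie in $(0, +\infty]$ by hypothesis, so multiplying the two inequalities preserves the ordering under the convention $a \cdot (+\infty) = +\infty$ for $a > 0$; applying $\log$ (monotone on $(0, +\infty]$, with $\log(+\infty) = +\infty$) then yields the desired
\[
\Theta_{\mc V'}(\mc L\phi_1, \mc L\phi_2) \le \Theta_{\mc V}(\phi_1, \phi_2).
\]

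The inequality is insensitive to emptiness of the admissible $t$-sets: if a source set is empty the corresponding $\beta$ equals $+\infty$ and the claim is vacuous on that side, while if it is nonempty any witness $t$ already gives the bound. Because the statement asks only for non-strict contraction, no quantitative gain from $\mc L$ (e.g.\ bounded diameter of $\mc L(\mc V)$ inside $\mc V'$) needs to be extracted; the argument is a pure bookkeeping extension of the finite-metric case. The only point to watch is the extended-valued arithmetic, but the stipulation $\beta_* \in (0, +\infty]$ rules out the indeterminate product $0 \cdot \infty$, so there is no real obstacle.
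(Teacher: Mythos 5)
Your argument is exactly the paper's proof: the same set inclusion $\{t>0: t\phi_1-\phi_2\in\mc V\}\subseteq\{t>0: t\mc L\phi_1-\mc L\phi_2\in\mc V'\}$ obtained from linearity and $\mc L(\mc V)\subseteq\mc V'$, giving $\beta_{\mc V'}(\mc L\phi_1,\mc L\phi_2)\le\beta_{\mc V}(\phi_1,\phi_2)$, then symmetrize, multiply, and take logs. You spell out the extended-value bookkeeping that the paper compresses into ``the statement easily follows,'' but the route is the same.
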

\begin{proof}
\[
\{t>0:\,t\mc L\phi_1- \mc L\phi_2\in \mc V'\}\supset \{t>0:\,t\phi_1- \phi_2\in \mc V\}
\]
and therefore
\[
\beta_{\mc V'}(\mc L\phi_1,\mc L\phi_2)\le \beta_{\mc V}(\phi_1,\phi_2)
\]
and the statement easily follows.
\end{proof}

The following classic theorem by G. Birkhoff provides conditions for a linear mapping between cones to be a strict contraction.
\begin{theorem}\label{Thm:ContCones}
Assume $\mc V\subset V$ and $\mc V'\subset V'$ are two  cones with Hilbert metrics $\Theta_{\mc V}$ and $\Theta_{\mc V'}$ respectively. If $\mc P: V\rightarrow  V'$ is a linear transformation such that $\mc P(\mc V)\subset \mc V'$, then
\[
\Theta_{\mc V'}(\mc P\psi_1,\mc P\psi_2)\le [1-e^{-\diam_{\mc V'}(\mc P\mc V)}] \Theta_{\mc V}(\psi_1,\psi_2)
\]
for all $\psi_1,\psi_2\in\mc V$,  where $\diam_{\mc V'}(\mc P\mc V)$ indicates the diameter of $\mc P\mc V$ in $\mc V'$.
\end{theorem}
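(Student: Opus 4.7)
If $D:=\diam_{\mc V'}(\mc P\mc V)=+\infty$ the claim reduces to Lemma \ref{Lem:ContHilb}, so I assume $D<\infty$ and also $\Theta:=\Theta_{\mc V}(\psi_1,\psi_2)<\infty$. Following Birkhoff's classical strategy, I plan to ``sandwich'' $\psi_2$ between two multiples of $\psi_1$, push the sandwich through $\mc P$, and then use the finite diameter of $\mc P\mc V$ inside $\mc V'$ to obtain a strictly tighter sandwich for $\mc P\psi_2$ in terms of $\mc P\psi_1$. Comparing the slack on each side before and after applying $\mc P$ is what will produce the contraction factor $1-e^{-D}$.

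\textbf{Sandwich step.} Take $\beta:=\beta_{\mc V}(\psi_1,\psi_2)$ and $\alpha:=1/\beta_{\mc V}(\psi_2,\psi_1)$, so that $\log(\beta/\alpha)=\Theta$ and both slack vectors $\beta\psi_1-\psi_2$ and $\psi_2-\alpha\psi_1$ lie in $\overline{\mc V}$. After a harmless perturbation (replace $\beta$ by $\beta+\epsilon$ and $\alpha$ by $\alpha-\epsilon$, and let $\epsilon\to 0^+$ at the end) I may assume both slack vectors sit in $\mc V$ itself. Their $\mc P$-images then belong to $\mc P\mc V$, and so are at $\Theta_{\mc V'}$-distance at most $D$: concretely, there exist $s,t>0$ with $st\le e^D$ such that
\[
s\,\mc P(\beta\psi_1-\psi_2)-\mc P(\psi_2-\alpha\psi_1)\in\overline{\mc V'},\qquad t\,\mc P(\psi_2-\alpha\psi_1)-\mc P(\beta\psi_1-\psi_2)\in\overline{\mc V'}.
\]
Regrouping the coefficients of $\mc P\psi_1$ and $\mc P\psi_2$ in these inclusions yields
\[
\beta_{\mc V'}(\mc P\psi_1,\mc P\psi_2)\le \frac{s\beta+\alpha}{s+1},\qquad \beta_{\mc V'}(\mc P\psi_2,\mc P\psi_1)\le \frac{t+1}{t\alpha+\beta},
\]
and hence $\exp\Theta_{\mc V'}(\mc P\psi_1,\mc P\psi_2)\le (s\beta+\alpha)(t+1)/[(s+1)(t\alpha+\beta)]$.

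\textbf{Optimization and main obstacle.} Rescaling to $\alpha=1$, $\beta=r:=e^\Theta\ge 1$, I must establish
\[
R(s,t):=\frac{(sr+1)(t+1)}{(s+1)(t+r)}\le r^{\,1-e^{-D}}\qquad \text{for all}\ s,t>0\ \text{with}\ st\le e^D.
\]
A short computation shows that $\partial_s\log R$ and $\partial_t\log R$ both carry the sign of $r-1\ge 0$, so the supremum is attained on the boundary $st=e^D$; substituting $t=e^D/s$ and differentiating pins the maximizer at $s=e^{D/2}/\sqrt r$, $t=e^{D/2}\sqrt r$, where $R$ collapses to $\bigl((e^{D/2}\sqrt r+1)/(e^{D/2}+\sqrt r)\bigr)^2$. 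The theorem then reduces to the one-variable estimate
\[
2\log\frac{e^{D/2}\sqrt r+1}{e^{D/2}+\sqrt r}\le (1-e^{-D})\log r,\qquad r\ge 1,\ D\ge 0,
\]
which holds because both sides vanish at $r=1$ and a derivative comparison collapses to the trivial inequality $e^D\sqrt r\le (e^{D/2}\sqrt r+1)(e^{D/2}+\sqrt r)$. I expect the main technical nuisance will not be this analytic step but the boundary bookkeeping in the sandwich construction: ensuring the $\epsilon\to 0^+$ limit passes properly when a slack vector sits on $\partial\mc V$, and handling the degenerate case where $\mc P$ sends a slack vector to $0$ in $V'$, which I would bypass by approximating with strictly interior perturbations.
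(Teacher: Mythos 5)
The paper does not prove Theorem~\ref{Thm:ContCones}; it is quoted as Birkhoff's classical contraction theorem with no argument supplied, so there is no in-paper proof to compare against. Your blind proof is the standard Birkhoff argument and it is correct. Concretely: after the harmless $\epsilon$-shift you may take the slack vectors $\beta\psi_1-\psi_2$ and $\psi_2-\alpha\psi_1$ in $\mc V$ (monotonicity of $\{t:t\psi_1-\psi_2\in\mc V\}$ follows since a cone is closed under addition and $\psi_1\in\mc V$), so their $\mc P$-images lie in $\mc P\mc V$ and are at $\Theta_{\mc V'}$-distance $\le D$, giving $s,t>0$ with $st\le e^D$ and the two inclusions you state; the regrouping yields exactly
\[
\exp\Theta_{\mc V'}(\mc P\psi_1,\mc P\psi_2)\le \frac{(s\beta+\alpha)(t+1)}{(s+1)(t\alpha+\beta)}.
\]
Normalising $\alpha=1$, $\beta=r=e^{\Theta}\ge1$, one computes $\partial_s\log R=\frac{(r-1)}{(sr+1)(s+1)}\ge0$ and $\partial_t\log R=\frac{(r-1)}{(t+1)(t+r)}\ge0$, so the supremum sits on $st=e^D$; the Lagrange condition there reduces (for $r>1$, $D>0$) to $e^{2D}-e^D=rs^2(e^D-1)$, hence $s=e^{D/2}/\sqrt r$, where $R$ collapses to $\bigl((e^{D/2}\sqrt r+1)/(e^{D/2}+\sqrt r)\bigr)^2$. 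Finally, with $k=e^{D/2}$ and $u=\sqrt r$, comparing derivatives of $\log\frac{ku+1}{k+u}$ and $(1-k^{-2})\log u$ in $u$ reduces to $(ku+1)(k+u)\ge k^2u$, which is your trivial inequality. The degeneracies you flag ($\Theta=\infty$, $D\in\{0,\infty\}$, or $\mc P$ annihilating a slack vector, which forces $\mc P\psi_2\propto\mc P\psi_1$ and makes the conclusion trivial) are all absorbed by the statement, so the only genuine bookkeeping is the $\epsilon\to0^+$ limit, as you note.
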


Now, in the case of cones of positive functions, we can check that for $c>0$, $\mc V_c\subset \mc V_0$ has finite diameter in $\mc V_0$ and so if a linear application maps $\mc V_0$ into $\mc V_c$ then it's a contraction with respect to the Hilbert metric $\Theta_0$.

\begin{lemma}\label{Lem:FiniteDiam}
For $c\in(0,1]$, $\mc V_c(J)$ has finite diameter in $(\mc V_0(J),\Theta_0)$.
\end{lemma}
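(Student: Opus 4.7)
The plan is to bound $\Theta_0(\phi_1,\phi_2)$ uniformly for $\phi_1,\phi_2\in\mc V_c(J)$ by computing $\beta_0$ explicitly and using the definition of the cone $\mc V_c$.

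First I would unfold the definition of $\beta_0$: since $\mc V_0(J)$ consists of nonnegative integrable functions, and both $\phi_1,\phi_2$ are automatically integrable, the condition $t\phi_1-\phi_2\in\mc V_0$ reduces to the pointwise (or a.e.) inequality $t\phi_1(x)\ge \phi_2(x)$. For $\phi_1\in\mc V_c$ the function $\phi_1$ is bounded below by a positive constant, so the ratio $\phi_2/\phi_1$ is well-defined, and one reads off
\[
\beta_0(\phi_1,\phi_2)=\esssup_{x\in J}\frac{\phi_2(x)}{\phi_1(x)}.
\]
Letting $M_i:=\esssup \phi_i$ and $m_i:=\essinf \phi_i$, this immediately gives the two-sided estimate
\[
\frac{m_2}{M_1}\le \beta_0(\phi_1,\phi_2)\le \frac{M_2}{m_1}, \qquad \frac{m_1}{M_2}\le \beta_0(\phi_2,\phi_1)\le \frac{M_1}{m_2}.
\]

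Next I would invoke the membership $\phi_i\in\mc V_c(J)$, which by definition forces $M_i/m_i\le c^{-1}$ for $i=1,2$. Multiplying the upper bounds on $\beta_0(\phi_1,\phi_2)$ and $\beta_0(\phi_2,\phi_1)$ one obtains
\[
\beta_0(\phi_1,\phi_2)\beta_0(\phi_2,\phi_1)\le \frac{M_1 M_2}{m_1 m_2}\le c^{-2},
\]
and so
\[
\Theta_0(\phi_1,\phi_2)=\log\bigl[\beta_0(\phi_1,\phi_2)\beta_0(\phi_2,\phi_1)\bigr]\le 2\log(1/c).
\]
Since $\phi_1,\phi_2\in\mc V_c$ were arbitrary, $\diam_{\mc V_0}\mc V_c\le 2\log(1/c)<\infty$, which is the claim.

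The proof is essentially a one-line computation once $\beta_0$ is identified with $\esssup\phi_2/\phi_1$, so there is no substantial obstacle. The only point requiring a bit of care is the measure-theoretic subtlety: one must verify that on $\mc V_c$ the pointwise definition of the ratio bound is compatible with the $L^1$ equivalence-class setting in which $\mc V_0$ lives, so that ``$t\phi_1-\phi_2\ge 0$ a.e.'' really is equivalent to $t\ge \esssup(\phi_2/\phi_1)$; this is straightforward since $\phi_1$ is bounded below away from zero.
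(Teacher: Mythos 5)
Your proposal is correct and matches the paper's argument in all essentials: both identify $\beta_0(\phi_1,\phi_2)$ with the essential supremum of $\phi_2/\phi_1$, form the product $\beta_0(\phi_1,\phi_2)\beta_0(\phi_2,\phi_1)$, and bound it by $c^{-2}$ using the cone condition on each function separately. The paper phrases the bound as a supremum over pairs $(x,y)$ and factors the ratio as $\tfrac{\phi_1(x)}{\phi_1(y)}\cdot\tfrac{\phi_2(y)}{\phi_2(x)}$, while you pass through $M_i/m_i$, but this is only a cosmetic difference.
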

\begin{proof}
For $\phi,\,\psi\in \mc V_c(J)$, it is not hard to compute 
\[
\beta_0(\phi,\psi)\beta_0(\psi,\phi)=\sup\left\{\frac{\phi(x)}{\psi(x)}\frac{\psi(y)}{\phi(y)}:\,x,y\in J\right\}\le c^{-2}.
\]
Therefore, $\Theta_0(\phi,\psi)<\log c^{-2}$.
\end{proof}
The following corollary is now immediate.
\begin{corollary}\label{Cor:Contraction}
If $\mc P$ is a linear transformation with $\mc P\mc V_0(J)\subset \mc V_c$ with $c\in(0,1]$, then
\[
\Theta_0(\mc P\phi,\mc P\psi)\le \lambda\Theta_0(\phi,\psi)\quad\quad\forall\phi,\psi\in\mc V_0
\]
with $\lambda=1-e^{-\diam_{\mc V_0(\mc V_c)}}\in[0,1)$.
\end{corollary}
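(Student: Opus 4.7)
The corollary is really just a packaging of Theorem~\ref{Thm:ContCones} (Birkhoff) together with Lemma~\ref{Lem:FiniteDiam}, so the plan is to verify that the hypotheses of Birkhoff's theorem are met with both ambient cones taken to be $\mc V_0(J)$, and then bound the diameter of the image.

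First I would instantiate Theorem~\ref{Thm:ContCones} with $\mc V = \mc V' = \mc V_0(J)$ and the same linear map $\mc P$. The hypothesis $\mc P(\mc V_0)\subset \mc V_0$ is guaranteed by the stronger assumption $\mc P(\mc V_0)\subset \mc V_c\subset \mc V_0$. Birkhoff's theorem then yields, for all $\phi,\psi\in \mc V_0$,
\[
\Theta_0(\mc P\phi,\mc P\psi)\le \bigl[1-e^{-\diam_{\mc V_0}(\mc P\mc V_0)}\bigr]\,\Theta_0(\phi,\psi),
\]
so the contraction factor I need to control is $1-e^{-\diam_{\mc V_0}(\mc P\mc V_0)}$.

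Second, I would use the inclusion $\mc P\mc V_0\subset \mc V_c$ to conclude
\[
\diam_{\mc V_0}(\mc P\mc V_0)\le \diam_{\mc V_0}(\mc V_c)\le \log c^{-2}<+\infty,
\]
where the middle inequality is precisely Lemma~\ref{Lem:FiniteDiam}. Setting $\lambda:=1-e^{-\diam_{\mc V_0}(\mc V_c)}$ (or, slightly more sharply, $1-e^{-\diam_{\mc V_0}(\mc P\mc V_0)}$), the finiteness of the diameter gives $\lambda\in[0,1)$, and the desired inequality follows.

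The only subtlety, and the one point that I would want to double-check, is that Birkhoff's theorem as stated in Theorem~\ref{Thm:ContCones} is being applied to the cone $\mc V_0$ whose intrinsic Hilbert metric is \emph{extended} (it can take the value $+\infty$); I would note that this is already accommodated in the statement of that theorem and in Lemma~\ref{Lem:ContHilb}, since the bound on $\beta_{\mc V'}(\mc P\phi_1,\mc P\phi_2)$ in terms of $\beta_{\mc V}(\phi_1,\phi_2)$ makes sense in $(0,+\infty]$, and once $\mc P\mc V_0\subset \mc V_c$ both $\beta_0(\mc P\phi,\mc P\psi)$ and $\beta_0(\mc P\psi,\mc P\phi)$ become finite and strictly positive by the computation carried out in the proof of Lemma~\ref{Lem:FiniteDiam}. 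Hence there is no genuine obstacle: the corollary is essentially a one-line consequence of the two preceding results, and writing out the proof amounts to chaining those inequalities together.
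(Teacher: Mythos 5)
Your proposal is correct and follows exactly the route the paper has in mind: the paper itself only says ``The following corollary is now immediate,'' meaning precisely the combination of Theorem~\ref{Thm:ContCones} (with $\mc V = \mc V' = \mc V_0(J)$) and Lemma~\ref{Lem:FiniteDiam} that you spell out, together with the monotonicity of $x\mapsto 1-e^{-x}$ to pass from $\diam_{\mc V_0}(\mc P\mc V_0)$ to $\diam_{\mc V_0}(\mc V_c)$. Your remark on the extended Hilbert metric is a sensible sanity check but not a genuine departure from the paper's argument.
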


 \subsubsection{Main Result}
Calling $P(x,A)$ the transition probability of the Markov process $X_n$, we assume that it is absolutely continuous and we denote as in the previous sections
\[
p(x,y)=\frac{dP(x,\cdot)}{d\Leb}(y)
\]
the density for the transition probability. We also call $J_z\subset I$ the support of $g(z,\cdot)$ and $\mu_Z$ the projection of the stationary measure for $(X_n,Z_n)$ on the $Z$ component. We introduce now an abstract assumption that we'll later show  to be satisfied by our model when the dynamical and observational noise satisfy  certain relationships.\\

{\bf Main Assumption:}
There is $c>0$ and  $I'\subset I$ with positive $\mu_Z$ measure\footnote{The stationarity of the process implies that we expect to observe $Z_n$ in the interval $I'$ with positive probability} such that for almost every $z\in I'$ there is a nontrivial interval $F_z\subset I$ satisfying: 
\begin{itemize}
\item[1)] for a.e. $x\in J_z$, $p(x,y)>c$ for every $y\in F_z$;
\item[2)]  there is a non-trivial interval $F_z'\subset F_z $ such that for every $z'\in F_z'$, $J_{z'}\subset F_z$ and $g(z',y)>c$ for every $y\in J_{z'}$;
\item[3)] $p<c^{-1}$ and $g<c^{-1}$.
\end{itemize}
\
\begin{figure}[h!]
\begin{center}
\includegraphics[scale=0.35]{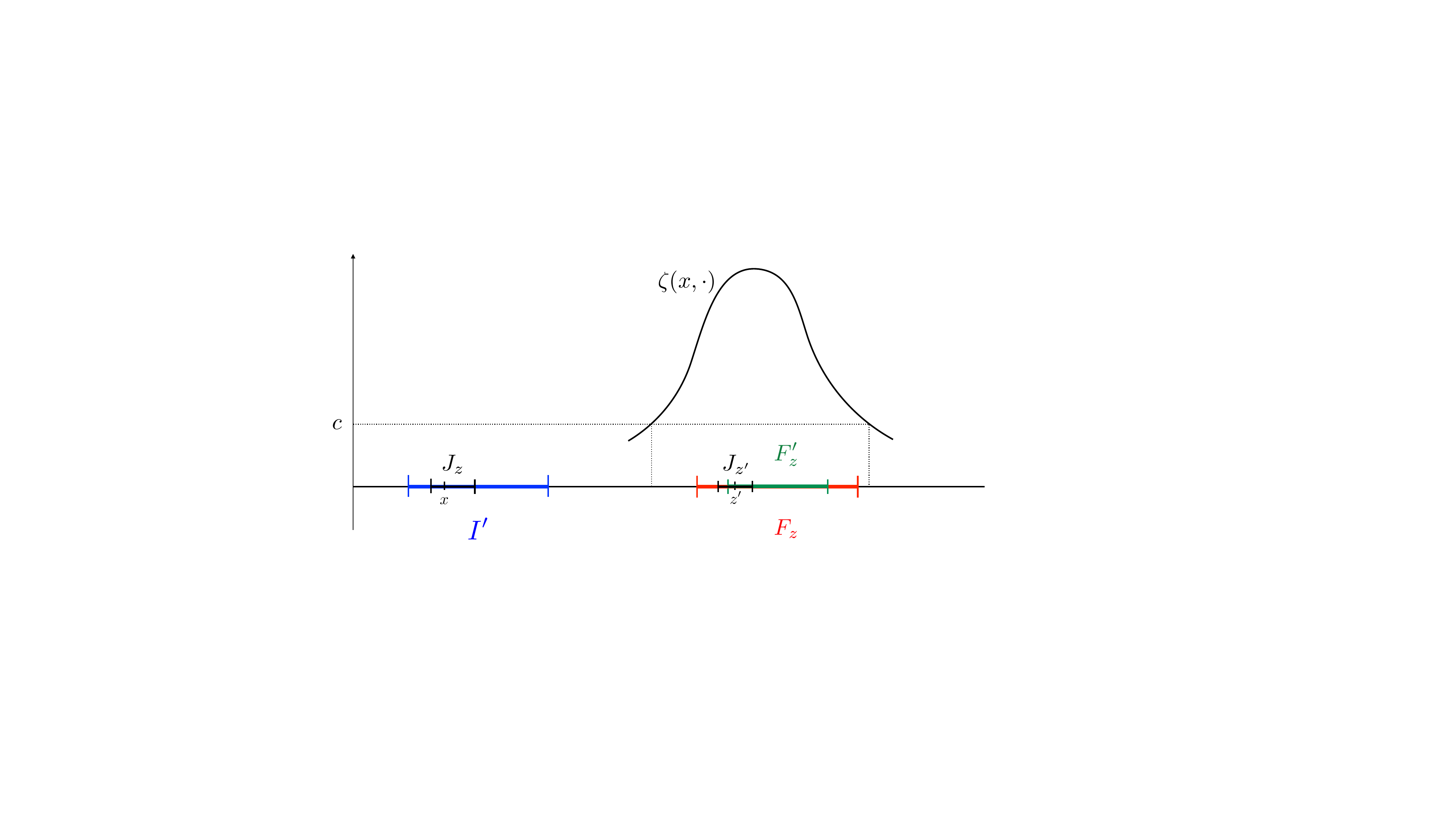}
\caption{The image above depicts the typical situation one has under the Main Assumption. The horizontal axis represents a subset of $I$, while the vertical axis gives the values taken by the transition densities $p(x,\cdot)$. Almost every point $x\in J_z$, where $z\in I'$, has transition density larger than $c>0$ on $F_z$, and every point of $F_z'\subset F_z$ has the corresponding set $J_{z'}$ contained in $F_z$.}
\end{center}
\end{figure}

The above entails  that when an observation ends in $I'$, which by the assumption happens with positive probability and therefore with positive frequency, it sees dynamical noise that is (in some sense) larger than the observational noise. This implies, roughly speaking, that with positive asymptotic frequency, $z_n$ belongs to  a set where $\mc P_{z_n}\mc V_0(J_{z_n})\subset \mc V_{a}(J_{z_{n+1}})$ with $a>0$, and  $\mc P_{z_n}|_{\mc  V_0(J_{z_n})}$ is a uniform contraction from $(\mc V_0(J_{z_n}),\Theta_0)$ to $(\mc V_0(J_{z_{n+1}}),\Theta_0)$ which will lead to the existence of an equivariant measure. We are going to make this more precise below.

Let's consider cones on each fiber, $\mc V_{(\bo x,\bo z)}:=\mc V_0(J_{z_0})$ where recall that $J_{z_0}$ is the support of $g(z_0,\cdot)$.
\begin{proposition}
If the Main Assumption holds, there is a unique absolutely continuous equivariant probability measure for the cocycle in \eqref{Eq:CocycleDef}.
\end{proposition}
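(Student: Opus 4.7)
The plan is to combine the Main Assumption with Birkhoff's cone contraction theorem (Theorem~\ref{Thm:ContCones}/Corollary~\ref{Cor:Contraction}) and the ergodicity of $\bo\mu$ under $\sigma$, via a pull-back argument, to produce an equivariant family $\{\hat\mu_{(\bo x,\bo z)}\}$ by contracting the fibered cones to a single point on almost every orbit. First I would verify the one-step contraction on ``good'' fibers: whenever $z_0\in I'$ and $z_1\in F_{z_0}'$, combine the lower bound $\zeta(\cdot,y)\ge c$ on $F_{z_0}$ and $g(z_1,\cdot)\ge c$ on $J_{z_1}\subset F_{z_0}$ with the universal upper bounds $\zeta,g<c^{-1}$ to conclude that for any $\nu\in \mc V_0(J_{z_0})$ the density of $\mc P_{z_0}\nu$, proportional to $g(z_1,y)\int \zeta(x,y)\,d\nu(x)$, satisfies uniform ratio bounds on $J_{z_1}$, i.e.\ lies in $\mc V_{a}(J_{z_1})$ for some $a=a(c)>0$ independent of the fiber. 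Lemma~\ref{Lem:FiniteDiam} then bounds $\diam_{\mc V_0(J_{z_1})}(\mc V_a(J_{z_1}))\le -2\log a$, and Corollary~\ref{Cor:Contraction} gives a uniform Hilbert-contraction factor $\lambda:=1-a^2\in(0,1)$ on such good steps.

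Next I would control the frequency of good steps. Let $G:=\{(\bo x,\bo z):z_0\in I',\,z_1\in F_{z_0}'\}$; by the Main Assumption together with the correct joint law of $(z_0,z_1)$ under $\bo\mu$, one has $\bo\mu(G)>0$. Ergodicity of $\sigma$ therefore gives, for $\bo\mu$-a.e.\ $(\bo x,\bo z)$, an asymptotic frequency $N_n(\bo x,\bo z)/n\to \bo\mu(G)$ of good indices in $\{0,\dots,n-1\}$. Combined with Lemma~\ref{Lem:ContHilb} (non-good steps are at least non-expanding), this yields
\[
\Theta_0\bigl(\mc P^n_{\sigma^{-n}(\bo x,\bo z)}\nu_1,\mc P^n_{\sigma^{-n}(\bo x,\bo z)}\nu_2\bigr)\le \lambda^{N_n(\sigma^{-n}(\bo x,\bo z))}\,\Theta_0(\nu_1,\nu_2)
\]
for any $\nu_1,\nu_2\in \mc V_0(J_{z_{-n}})$, where the right-hand side is finite as soon as one good step has been taken (so that both images lie in the finite-diameter subcone $\mc V_a(J_{z_0})$). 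Picking a reference family such as $\nu^{(n)}=\Leb|_{J_{z_{-n}}}/\Leb(J_{z_{-n}})$ and exploiting the nested pull-back structure, the sequence $\hat\mu^{(n)}_{(\bo x,\bo z)}:=\mc P^n_{\sigma^{-n}(\bo x,\bo z)}\nu^{(n)}$ is Cauchy in the Hilbert metric on $\mc V_a(J_{z_0})$; after normalization to unit mass this yields the absolutely continuous limit $\hat\mu_{(\bo x,\bo z)}$. Equivariance $\mc P_{z_0}\hat\mu_{(\bo x,\bo z)}=\hat\mu_{\sigma(\bo x,\bo z)}$ is inherited from the telescoping definition and the continuity of $\mc P_{z_0}$ in $\Theta_0$.

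For uniqueness, given a second absolutely continuous equivariant family $\hat\mu'$, one writes $\hat\mu_{(\bo x,\bo z)}=\mc P^n_{\sigma^{-n}(\bo x,\bo z)}\hat\mu_{\sigma^{-n}(\bo x,\bo z)}$ and analogously for $\hat\mu'$; after one good step both lie in the finite-diameter subcone $\mc V_a(J_{z_0})$, so the Hilbert distance at time $0$ is finite, and the contraction estimate with $N_n\to\infty$ forces $\Theta_0(\hat\mu_{(\bo x,\bo z)},\hat\mu'_{(\bo x,\bo z)})=0$; since both are probability measures this gives equality. The exponential attraction statement in the definition follows from the same estimate applied to an arbitrary initial $\nu$. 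The main obstacle I expect is handling the extended Hilbert metric on $\mc V_0$: because densities may vanish, initial Hilbert distances can be $+\infty$, and the contraction bound is useful only after the orbit has passed through at least one good step and entered a subcone of finite diameter; this requires careful bookkeeping to show that the pull-back sequence enters such a subcone almost surely and in finitely many steps, and that convergence in $\Theta_0$ on $\mc V_a$ descends to $L^1$-convergence of the normalized densities, yielding a bona fide probability measure in the limit.
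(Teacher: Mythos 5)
Your proof is correct and follows essentially the same strategy as the paper: a one-step Hilbert-cone contraction on good fibers (where $z_0\in I'$ and $z_1\in F'_{z_0}$, combining items 1)--3) of the Main Assumption to land in $\mc V_{c^4}(J_{z_1})$), ergodicity of $\bo\mu$ to guarantee positive asymptotic frequency of good steps, and a pull-back argument along the negative time axis. The only real variation is in the final construction: the paper realizes the equivariant measure as the unique direction in the nested intersection $\bigcap_{n\ge 1}\mc P^n_{\sigma^{-n}(\bo x,\bo z)}\mc V_0(J_{z_{-n}})$, invoking weak compactness for nonemptiness and vanishing $\Theta_0$-diameter for uniqueness, whereas you take the limit of the Cauchy sequence of pull-backs $\mc P^n_{\sigma^{-n}(\bo x,\bo z)}\nu^{(n)}$ of a fixed reference density. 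The two routes are equivalent once the vanishing-diameter estimate is established; one small caution in your route is that the Cauchy property should be deduced from that diameter bound, not directly from the factorization $\Theta_0(\hat\mu^{(m)},\hat\mu^{(n)})\le \lambda^{N_n}\Theta_0(\mc P^{m-n}\nu^{(m)},\nu^{(n)})$, because the right-hand factor can itself be infinite when no good step lies in $\{-m,\dots,-n-1\}$. You also flag, correctly and more explicitly than the paper, that initial Hilbert distances may be $+\infty$ until a good step has occurred and that $\Theta_0$-convergence within a finite-diameter subcone must be converted to $L^1$-convergence of normalized densities to produce a genuine probability measure.
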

\begin{proof}
 With the above definitions, $\mc P_{(\bo x, \bo z)}$ sends nonnegative functions to nonnegative functions, i.e. it maps $\mc V_0(J_{z_0}) $ to $\mc V_0(J_{z_1})$.

Let $z_0\in I'$ and $z_1\in F_{z_0}'$ so that $J_{z_{1}}\subset F_{z_0}$, where $I'$, $F_{z_0}'$, and $F_{z_0}$ are as in the Main Assumption. Item 1) of the main assumption implies that given $\nu$ with density $\psi\in \mc V_0(J_{z_0})$, $\nu^+$ defined as in \eqref{Eq:Evolution} has density $\psi^+(y)=\int p(x,y)\psi(x)dx$ and  on $F_{z_{0}}$, $\psi^+>c$.  Since by assumption 2) we have that $g(z_{1},\cdot)>c$, for every $x,y\in J_{z_1}$, and by assumption 3) $g,\,\psi^+<c^{-1}$
\[
c^4<\frac{g(z_{1},x)\psi^+(x)}{g(z_{1},y)\psi^+(y)} < c^{-4}
\]
which means
\[
g(z_{1},\cdot)\psi^+(\cdot)\in \mc V_{c^4}(J_{z_1}).
\]
Therefore the linear application $\psi\mapsto g(z_{1},\cdot)\psi^+(\cdot)$ sends $\mc V_0(J_{z_0})$ to $\mc V_{c^4}(J_{z_{1}})$, by Corollary \ref{Cor:Contraction} it's a contraction with respect to the Hilbert metric $\Theta_0$. Since when $\nu$ has density $\psi$, by definition, $\mc P_{(\bo x,\bo z)}\nu$ has density
\[
\frac{g(z_{1},\cdot)\psi^+(\cdot)}{\int_Ig(z_{1},y)\psi^+(y)dy}
\]
and the Hilbert metric is projective, then also $\mc P_{(\bo x,\bo z )}$ from $\mc V_0(J_{z_0})$ to  $\mc V_{c^4}(J_{z_1})\subset\mc V_{0}(J_{z_1})$ is a contraction with rate $\lambda\in(0,1)$ independent of $z_0\in I'$.

Now consider the event $E:=\{Z_n\in I' \mbox{ and } J_{Z_{n+1}}\subset F_{Z_n} \}$. The main assumption implies that $E$ has positive measure and, by ergodicity, this event will occur almost surely for infinitely many $n\in \Z$ and with positive frequency.  This implies that for $\bo \mu$-a.e. sequence $(\bo x, \bo z)$ the set
\[
\bigcap_{n\ge 1} \mc P^n_{(\sigma^{-n}\bo x,\sigma^{-n}\bo z)} \mc V_0(J_{z_{-n}})\
\]
contains a unique direction: the intersection is non-empty because intersection of compact (with respect to the weak topology) projective sets; the direction is unique, because the diameter of $\mc P^n_{(\sigma^{-n}\bo x,\sigma^{-n}\bo z)} \mc V_0(J_{z_{-n}})$ tends to zero for a.e. $(\bo x, \bo z)$. The probability density in this unique direction is the equivariant measure in the fibre sitting on $(\bo x, \bo z)$ (the equivariance is immediate by construction).
\end{proof}

Recall that in the model for a bank leverage with heteroskedastic noise, we have that $(X_n,Z_n)$ evolve as
\[
X_{n+1}=T(X_n)+\sigma(X_n) \eta_n \quad Z_{n+1}=X_{n+1}+s(X_{n+1})\epsilon_{n+1}
\]
where $\{\eta_n\}_{n\in \N}$ and $\{\epsilon_n\}_{n\in \N}$ are independent random variables. The random variables $\{\eta_n\}_{n\in \N}$ and the function $\sigma: I\rightarrow \R^+$ are fixed by the model and satisfy: 
\\

(M1) $\{\eta_n\}_{n\in \N}$ are identically distributed with absolutely continuous distribution supported on a centered interval $[-\delta/2,\delta/2]$ and density bounded and lower bounded away from zero; 

(M2) the function $\sigma$ is continuous and lower bounded away from zero. 
\\

For the observational noise, we impose that:\\

{\bf Assumption on the Observational Noise:} $\{\epsilon_n\}_{n\in \N}$ are independent and identically distributed, absolutely continuous, with distribution having support on the interval $[-\epsilon/2,\epsilon/2]$, and positive density uniformly bounded and uniformly bounded away from zero; the function $s:I\rightarrow \R^+$ is continuous and bounded away from zero. 
\\

Notice that in the model for a bank leverage with heteroskedastic noise, under the assumption above, we can write
\begin{align*}
J_z&=\left\{x\in I:\,\exists w\in[-\epsilon/2,\epsilon/2]\,\,\mbox{s.t.}\,\, z=x+s(x)w\right\}\\
&=\left\{x\in I:\,z\in[x-s(x)\epsilon/2,x+s(x)\epsilon/2]\right\}
\end{align*}
in particular, the set $J_z$ is closed.

\begin{lemma}
Let the Assumption on the Observational Noise hold. Assume also that there is $z_0$ in the support of $\mu_Z$  such that 
\begin{itemize}
\item[A)]  for every $x\in J_{z_0}$, $|T(J_{z_0})|<\sigma(x) \frac{\delta}{2}$\footnote{Here $|\cdot|$ applied to a set, denotes the minimal length of an interval containing that set.};
\item[B)]  there is $F'$ an interval around $T(z_0)$ such that for every $z'\in F'$, $J_{z'}$ is contained in $\mbox{Op}(T(J_{z_0}))\neq \emptyset$.
\end{itemize}

Then the model for a bank leverage with heteroskedastic noise satisfies the Main Assumption.
\end{lemma}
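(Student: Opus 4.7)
I will take $I'$ to be a small open neighborhood of $z_0$, which has positive $\mu_Z$-measure because $z_0 \in \supp \mu_Z$, and verify the three items of the Main Assumption uniformly for $z\in I'$ by unpacking the explicit form of the densities in the model.

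First, in the model,
\[
\zeta(x,y)=\frac{1}{\sigma(x)}\,g_\eta\!\left(\frac{y-T(x)}{\sigma(x)}\right),\qquad g(z,y)=\frac{1}{s(y)}\,g_\epsilon\!\left(\frac{z-y}{s(y)}\right),
\]
where $g_\eta$ and $g_\epsilon$ are the densities of $\eta_n$ and $\epsilon_n$. By (M1), (M2), and the Assumption on the Observational Noise, each of $g_\eta, g_\epsilon$ is bounded above by some $M$ and below by some $m>0$ on its closed support $[-\delta/2,\delta/2]$, $[-\epsilon/2,\epsilon/2]$; combined with $\sigma,s$ being continuous and bounded away from zero on $I$, item 3) of the Main Assumption follows with $c^{-1}:=M/\min(\min_I\sigma,\min_I s)$.

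For items 1) and 2) at $z=z_0$, set $F_{z_0}:=\mathrm{Op}(T(J_{z_0}))$, shrinking the open neighborhood if necessary so that $|F_{z_0}|<\sigma(x)\delta/2$ for every $x\in J_{z_0}$---possible by A), compactness of $J_{z_0}$, and continuity of $\sigma$. Then for any $x\in J_{z_0}$ and $y\in F_{z_0}$ one has $|y-T(x)|<\sigma(x)\delta/2$, so the argument of $g_\eta$ lies in the interior of $[-\delta/2,\delta/2]$ and $\zeta(x,y)\ge m/\max_I\sigma$, giving 1). Setting $F_{z_0}':=F'$ as in B), the inclusion $J_{z'}\subset F_{z_0}$ is precisely the content of B); and for $y\in J_{z'}$ the definition of $J_{z'}$ gives $|z'-y|\le s(y)\epsilon/2$, so $g(z',y)\ge m/\max_I s$, giving 2).

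Finally, to pass from the single point $z_0$ to a.e.\ $z$ in a small neighborhood $I'\ni z_0$, I would use the continuity of the multivalued map $z\mapsto J_z$ in the Hausdorff distance---an easy consequence of continuity and positivity of $s$---to show that both the strict inequality in A) and the interval inclusion in B) are open conditions in $z$. Defining $F_z:=\mathrm{Op}(T(J_z))$ and $F_z'$ analogously for such $z$, the estimates above carry over with a uniform constant $c>0$ for all $z\in I'$. The main obstacle is precisely this last continuity step: checking that a small perturbation of $z_0$ does not allow $T(J_z)$ to expand past $\sigma(x)\delta/2$, which follows from the strict inequality in A) and uniform continuity of $T$ and $\sigma$ on $I$.
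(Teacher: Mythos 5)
Your proof follows the same overall strategy as the paper's (verify the Main Assumption at $z_0$ using A) and B), then extend to a small neighbourhood $I'$ of $z_0$ by continuity, with positive $\mu_Z$-measure coming from $z_0\in\supp\mu_Z$), and the verification at $z_0$ itself is fine and arguably more careful than the paper's sketch. The issue is your last step. You define $F_z:=\mathrm{Op}(T(J_z))$ and ``$F_z'$ analogously'' for $z\neq z_0$, and then claim that condition 2) of the Main Assumption --- the existence of an interval $F_z'$ with $J_{z'}\subset\mathrm{Op}(T(J_z))$ for all $z'\in F_z'$ --- propagates from $z_0$ by openness. But condition B) of the lemma is a statement only about $z_0$, and having both $J_{z'}$ and the target $\mathrm{Op}(T(J_z))$ drift simultaneously as $z$ moves means this is not an automatic ``open condition'': one has to argue that the compact sets $J_{z'}$ stay uniformly in the interior of the moving open sets $\mathrm{Op}(T(J_z))$, using the Hausdorff continuity of both $z'\mapsto J_{z'}$ and $z\mapsto T(J_z)$. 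Your sketch acknowledges that this continuity step is ``the main obstacle'' but does not actually carry it out for item 2), only for item 1) (the inequality $|T(J_z)|<\sigma(x)\delta/2$).

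The paper avoids this entirely by keeping $F_z$ and $F_z'$ \emph{independent of $z$}: take $F_z:=\mathrm{Op}(T(J_{z_0}))$ and $F_z':=F'$ for every $z\in I'$. Then condition 2) is literally condition B) with no perturbation argument required, and the only thing that needs continuity is condition 1): $\zeta(x,y)>c$ for $x\in J_z$, $y\in\mathrm{Op}(T(J_{z_0}))$, uniformly in $z$ near $z_0$ --- which follows from compactness of $J_{z_0}\times \overline{\mathrm{Op}(T(J_{z_0}))}$, the strict positivity there you already established, and continuity of $\zeta$ (through $T$ and $\sigma$) together with Hausdorff continuity of $z\mapsto J_z$. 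Notice the Main Assumption does not require $F_z$ to track $z$; exploiting that freedom collapses your hardest step to a triviality. I'd recommend revising the final paragraph to make this fixed-cone choice explicit; as written, the claim that ``the interval inclusion in B) is an open condition in $z$'' for the $z$-dependent $F_z$ is an assertion that would need its own proof.
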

\begin{proof}
Condition A) together with the modelling assumption (M1) above ensures that there is $c>0$ such that the kernel density $p(x,y)>c$ for any $x\in J_{z_0}$ and any $y\in T(J_{z_0})$. By continuity of $\sigma$ and $T$, the above is true for any $z$ in a sufficiently small interval $I'$ around $z_0$, i.e. for every $z\in I'$, $x\in J_z$ and $y\in T(J_{z_0})$, $p(x,y)>c>0$ (where $c>0$ is eventually different to the constant found above). Since $z_0$ is in the support of $\mu_Z$ and $I'$ is a nieghborhood of $z_0$, $\mu_Z(I')>0$.  This implies condition 1) in the Main Assumption with $F_z$ independent of $z$ and equal to $F:=T(J_{z_0})$.

Condition B) immediately implies condition 2) of the Main Assumption. Condition 3) of the Main Assumption holds from the modelling hypotheses an the Assumption on the Observational Noise.
\end{proof}

\begin{remark}
A) and B) above are satisfied if the global condition
\[
\epsilon \max_{x\in I} s(x)\le \min\{|I|/10,\min_x\sigma(x)\delta/10\}
\] 
is satisfied.
\end{remark}


\section{Limit theorems}\label{LLTT}
As we did in \cite{lillo22}, we could now use the Nagaeev-Guiva'rch technique to get a few limit theorems, in particular the central limit theorem and the large deviations property.
 Let us first introduce for the observable $u\in {\mathcal B}$:
$$
S_n:=\sum_{k=0}^{n-1}u(T^k_{\overline{\eta}}x+s(T^k_{\overline{\eta}}x)\epsilon_k).
$$
Then we introduce the perturbed operator, for $z\in \mathbb{C}:$
 $$
\mathcal{L}_z(f):=\mathcal{L}(\int e^{zu(\cdot+s(\cdot)\epsilon) }d\psi(\epsilon)f(\cdot)), \  f\in \mathcal{B}.
$$
Notice that $\mathcal{L}_z$ maps again  the Banach space $\mathcal{B}$ of bounded variation function into itself.
We have
$$
\int e^{zS_n} fd\mathfrak{P}=\int \mathcal{L}_z^n(f)dx.
$$
From this, using the fact that the map $z\rightarrow \mathcal{L}_z$ is complex analytic on $\mathbb{C}$ and $\mathcal{L}_z$ is a smooth perturbation of the compact operator $\mathcal{L}$, by   Kato's perturbation theorem we will get as asymptotic expansion of the leading eigenvalue of $\mathcal{L}_z$ in the neighborhood of $z=0$ and this will be enough to show  the CLT and the large deviation.  
 We follow here the paper \cite{ANV}, where the reader could find all the explications and the references for this technique, especially since \cite{ANV} treats an annealed perturbation like ours.
 We first compute the variance. A direct computation gives
$$
\sigma^2=\lim_{n\rightarrow \infty}\frac1n\mathbb{E}_{\mathfrak{P}}(S_n^2)=\int \left(\int u^2(x+s(x)\epsilon)d\psi(\epsilon)\right)d\mu(x)+
$$
$$
2\sum_{n=1}^{\infty}\int \int u(x+s(x)\epsilon)d\psi(\epsilon)\ \mathcal{U}^n\left[\int u(x+s(x)\epsilon)d\psi(\epsilon)\right]d\mu(x).
$$
Since the operator $\mathcal{U}$ is the adjoint of the averaged operator $\mathcal{L}$ and the latter has a spectral gap  because it is compact, the correlation integral in the second sum above decays exponentially fast in $n$ for $u\in \mathcal{B}\cap L^1.$ Therefore the variance will be finite; we will further ask  that is non zero.
\begin{proposition}
Let us suppose that our Markov systems verifies {\bf assumption TM} and the modulation term is bounded as in (\ref{56}). Moreover suppose now $\sigma^2>0$ and take $u\in \mathcal{B}\cap L^1$ centered, namely 
$$
\int \int u(x+s(x)\epsilon) d\psi(\epsilon)d\mu(x)=0.
$$
Then we have:
    \begin{itemize}
    
\item (CLT) The process $\frac{S_n}{\sqrt{n}}$ converges in law to $\mathcal{N}(0, \sigma^2)$ under the probability $\mathfrak{P}.$
\item (LD) There exists a non-negative function $c$, continuous, strictly convex, vanishing only at $0,$ such that for sufficiently small $\varepsilon$ we have
$$
\lim_{n\rightarrow \infty}\frac1n \log \mathfrak{P}(S_n>n\varepsilon)=-c(\varepsilon).
$$

    \end{itemize}
    \end{proposition}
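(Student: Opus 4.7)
The plan is to follow the standard Nagaev--Guivarc'h spectral method, using the identity $\int e^{zS_n} f\, d\rho_Z = \int \mathcal{L}_z^n f\, dx$ that the authors have already established, together with the compactness of $\mathcal L$ on $BV$ proved in Remark \ref{kj}. The first step is to check that the family $\{\mathcal L_z\}_{z\in \mathbb C}$ is a complex-analytic perturbation of $\mathcal L$ inside $\mathcal B(BV)$. For this I would write
\[
\mathcal L_z f = \mathcal L(M_z f), \qquad M_z f(x) := f(x)\int e^{z u(x+s(x)\epsilon)}\, d\psi(\epsilon),
\]
and observe that, since $u\in BV$, $s$ is Lipschitz, $\psi$ is a probability measure on $[-1,1]$, and (\ref{56}) keeps $x+s(x)\epsilon\in I$, the multiplier $x\mapsto \int e^{zu(x+s(x)\epsilon)}\,d\psi(\epsilon)$ is itself in $BV$ with total variation controlled by $|z|e^{|z|\|u\|_\infty}\|u\|_{BV}(1+\mathrm{Lip}(s))$. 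This gives boundedness and analyticity of $z\mapsto M_z\in \mathcal B(BV)$, hence of $z\mapsto \mathcal L_z$. Because $\mathcal L_0=\mathcal L$ is compact on $BV$ with simple leading eigenvalue $1$ (eigenvector $w$) and the rest of its spectrum strictly inside the unit disk (by (\ref{dc}) and Remark~\ref{kj}), Kato's perturbation theorem yields, for $z$ in a small complex neighbourhood $U$ of $0$, an analytic decomposition
\[
\mathcal L_z^n = \lambda(z)^n \Pi_z + N_z^n, \qquad \Pi_z N_z = N_z\Pi_z = 0, \quad \|N_z^n\|_{BV}\le C\gamma^n,
\]
with $\gamma<1$ and $\lambda(\cdot),\,\Pi(\cdot)$ analytic on $U$ and $\lambda(0)=1$, $\Pi_0 f = w\int f\,dx$.

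Next I would compute the Taylor expansion $\lambda(z)=1+\lambda'(0)z+\tfrac{1}{2}\lambda''(0)z^2+O(z^3)$. By differentiating the eigenvalue equation $\mathcal L_z v_z = \lambda(z) v_z$, a standard bootstrap (as in \cite{ANV}) identifies
\[
\lambda'(0) = \int\!\!\int u(x+s(x)\epsilon)\,d\psi(\epsilon)\,d\mu(x) = 0
\]
by hypothesis, and $\lambda''(0) = \sigma^2$, where $\sigma^2$ is exactly the Green--Kubo sum the authors wrote just before the proposition, via the relation $\int \mathcal L_z^n f\,dx = \mathbb E_{\rho_Z}[e^{zS_n}f]$ evaluated at $z=0$ through two derivatives and the adjoint action of $\mathcal U^n$ appearing in (\ref{eq::adjointcorrelation}). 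The hypothesis $\sigma^2>0$ ensures non-degeneracy.

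For the CLT, I set $z=it/\sqrt n$ with $t\in\mathbb R$ fixed; then $z\in U$ for large $n$, and taking $f\equiv w$ (so that $\rho_Z$-averages of $e^{zS_n}$ are recovered via the initial density) gives
\[
\mathbb E_{\rho_Z}\!\left[e^{itS_n/\sqrt n}\right] = \lambda(it/\sqrt n)^n \int \Pi_{it/\sqrt n} w\,dx + O(\gamma^n)
= \Bigl(1 - \tfrac{\sigma^2 t^2}{2n} + o(1/n)\Bigr)^n (1+o(1)) \longrightarrow e^{-\sigma^2 t^2/2},
\]
whence Lévy's continuity theorem yields the CLT. For the large deviation principle, I restrict $z$ to a real neighbourhood $(-z_0,z_0)$ of $0$; there $\lambda(z)>0$ is real analytic and strictly log-convex (because $\lambda''(0)=\sigma^2>0$), so
\[
\frac{1}{n}\log\mathbb E_{\rho_Z}[e^{zS_n}] \longrightarrow \log\lambda(z),
\]
and the Gärtner--Ellis theorem produces the rate function $c(\varepsilon)=\sup_{z}[z\varepsilon-\log\lambda(z)]$, which is continuous, strictly convex on a neighbourhood of $0$ and vanishes only at $\varepsilon=0$; this gives the announced limit for small $\varepsilon$.

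The main obstacle I anticipate is the analyticity and uniform $BV$-bounds for the twisted multiplier $M_z$: because the observational noise enters through the nonlinear argument $x+s(x)\epsilon$ (and $s$ is only Lipschitz), one has to check that the integration against $\psi$ does not destroy the $BV$ regularity and that the resulting family $\mathcal L_z$ genuinely depends analytically on $z$ in operator norm. Once this is secured, the spectral CLT/LD machinery is essentially that of \cite{ANV}, and the fact that the observational noise does not appear as a cocycle (as the authors emphasise) is harmless because it is absorbed into the definition of $M_z$ before applying $\mathcal L$.
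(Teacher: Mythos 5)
Your proposal is correct and follows essentially the same route as the paper: the paper itself only sketches the argument, defining the twisted operator $\mathcal L_z(f)=\mathcal L\bigl(f\cdot\int e^{zu(\cdot+s(\cdot)\epsilon)}d\psi(\epsilon)\bigr)$, invoking Kato's perturbation theory together with the compactness of $\mathcal L$ on $BV$, and then pointing to the Nagaev--Guivarc'h machinery of \cite{ANV}. You simply fill in the standard spectral-decomposition, Taylor-expansion, L\'evy-continuity and G\"artner--Ellis steps that \cite{ANV} supplies; and the obstacle you flag at the end (that the $BV$ regularity of the multiplier $x\mapsto\int e^{zu(x+s(x)\epsilon)}d\psi(\epsilon)$ needs to be checked, since $s$ is only Lipschitz and the paper does not impose a monotonicity condition on $x\mapsto x+s(x)\epsilon$) is a legitimate technical point that the paper's terse argument also glosses over, so it is not a gap relative to the paper.
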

    \section{Concentration inequalities}\label{CI}
We now give another large deviation result provided by a concentration inequality for our process, which is an interesting  general result in itself. We continue to consider   $Z_n$ as the sum of a Markov chain $X_n$ plus the noise $s(X_n)\epsilon_n$ as we did in the filtering section.
 For a function $K(y_1,\dots,y_n)$ which is separately Lipschitz in each variable $y_j$  with Lipschitz constant $\text{Lip}_j(K),$ we need  to estimate
$$
\mathfrak{C}_n:=\mathbb{E}_{\mathbb{P}\times\mathbb{P}_{\epsilon}}\left(e^{K(X_1+s(X_1)\epsilon_1,\dots,X_n+s(X_n)\epsilon_n)-\mathbb{E}_{\mathbb{P}\times\mathbb{P}_{\epsilon}}(K(X_1+s(X_1)\epsilon_1,\dots,X_n+s(X_n)\epsilon_n))}\right)
$$
and find an exponential bound for it. Notice that we now admit that the chain and the observational noise are defined on different spaces with probability respectively $\mathbb{P}$ and $\mathbb{P}_{\epsilon}.$ Inspired by \cite{maldonado}, we can rewrite the preceding expectation as 
$$
\mathfrak{C}_n=\int d\mathbb{P}e^{\mathbb{E}_{\mathbb{P}_{\epsilon}}(K(X_1+s(X_1)\epsilon_1,\dots,X_n+s(X_n)\epsilon_n))}e^{-\mathbb{E}_{\mathbb{P}\times\mathbb{P}_{\epsilon}}(K(X_1+s(X_1)\epsilon_1,\dots,X_n+s(X_n)\epsilon_n))}
$$
$$
\cdot\int d\mathbb{P}_{\epsilon}e^{K(X_1+s(X_1)\epsilon_1,\dots,X_n+s(X_n)\epsilon_n)-\mathbb{E}_{\mathbb{P}_{\epsilon}}(K(X_1+s(X_1)\epsilon_1,\dots,X_n+s(X_n)\epsilon_n))}
$$
where the first integral in $d\mathbb P$ integrates all that it is following it. \\

{\bf Assumption CI.} We now suppose that $s$ is  a constant function equal to $s_M.$ \\

Since the variables $\epsilon_j$ are i.i.d. we can apply the concentration bound to the inner integral above and conclude that there is a constant $C_{\epsilon}$  such that 
$$
\int d\mathbb{P}_{\epsilon}e^{K(X_1+s_M\epsilon_1,\dots,X_n+s_M\epsilon_n)-\mathbb{E}_{\mathbb{P}_{\epsilon}}(K(X_1+s_M\epsilon_1,\dots,X_n+s_M\epsilon_n))}\le e^{C_{\epsilon} \sum_{j=1}^n s_M^2 \text{Lip}_j(K)^2}.
$$
On the other hand in the first integral in $\mathfrak{C}_n$ we recognize the concentration integral of the separately Lipschitz function
$$
(y_1, \dots, y_n)\rightarrow \mathbb{E}_{\mathbb{P}_{\epsilon}}(K(y_1+s_M\epsilon_1,\dots,y_n+s_M\epsilon_n)),
$$
with $j$- Lipschitz constant $\text{Lip}_j(K).$ 
Then we use the fact that an exponential concentration inequality holds for Markov chain with exponential convergence to the (unique) stationary measure $\mu,$ see, for instance, \cite{paulin} 
Therefore there will be another constant $C_X$ such that 
$$
\int d\mathbb{P}  e^{\mathbb{E}_{\mathbb{P}_{\epsilon}}(K(X_1+s_M\epsilon_1,\dots,X_n+s_M\epsilon_n))}e^{-\mathbb{E}_{\mathbb{P}\times\mathbb{P}_{\epsilon}}(K(X_1+s_M\epsilon_1,\dots,X_n+s_M\epsilon_n))}\le  e^{C_{X} \sum_{j=1}^n \text{Lip}_j(K)^2}.
$$
Combining the two bounds we get a very useful large deviation result by applying Chebyshev's inequality:
\begin{proposition}
Let us suppose that our Markov systems verifies {\bf assumption TM} and the modulation term satisfies Assumption {\bf CI}; then  we have:
    $$
\mathbb{P}\times \mathbb{P}_{\epsilon}\left(|K(X_1+s_M\epsilon_1,\dots,X_n+s_M\epsilon_n)-\mathbb{E}_{\mathbb{P}\times \mathbb{P}_{\epsilon}}(K(X_1+s_M\epsilon_1,\dots,X_n+s_M\epsilon_n))|>t\right)\le
$$
$$
2\exp\left(\frac{-t^2}{4\sum_{j=1}^n \mbox{Lip}_j(K)^2[C_{\epsilon}s_M^2+C_X]}\right).
$$
\end{proposition}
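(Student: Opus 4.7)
The plan is to upgrade the moment generating function estimate already derived in the paragraphs preceding the statement to a Gaussian concentration inequality via a standard Chernoff / Chebyshev argument.

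First, I would observe that the derivation of
$$
\mathfrak{C}_n \le \exp\Bigl((C_\epsilon s_M^2 + C_X)\sum_{j=1}^n \text{Lip}_j(K)^2\Bigr)
$$
given in the text applies verbatim with $K$ replaced by $\lambda K$ for any $\lambda \in \mathbb{R}$: the $j$-th Lipschitz constant of $\lambda K$ is $|\lambda|\text{Lip}_j(K)$, and both the sub-Gaussian estimate for the inner $\mathbb{P}_\epsilon$-integral and the Markov-chain concentration estimate for the outer $\mathbb{P}$-integral scale quadratically in this constant. Writing $D := C_\epsilon s_M^2 + C_X$ and $S := \sum_{j=1}^n \text{Lip}_j(K)^2$, this yields the sub-Gaussian moment generating function bound
$$
\mathbb{E}_{\mathbb{P}\times\mathbb{P}_\epsilon}\!\left[e^{\lambda(K - \mathbb{E}K)}\right] \le e^{\lambda^2 D S}, \qquad \lambda \in \mathbb{R},
$$
where $K$ is shorthand for $K(X_1 + s_M\epsilon_1, \dots, X_n + s_M \epsilon_n)$.

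Next, Chernoff's inequality $\mathbb{P}(K - \mathbb{E}K > t) \le e^{-\lambda t}\,\mathbb{E}[e^{\lambda(K - \mathbb{E}K)}] \le \exp(-\lambda t + \lambda^2 D S)$, valid for every $\lambda > 0$ and optimized by taking $\lambda^\star = t/(2DS)$, produces the one-sided tail bound
$$
\mathbb{P}(K - \mathbb{E}K > t) \le \exp\!\left(-\frac{t^2}{4 D S}\right).
$$
Applying the same reasoning to $-K$, whose $j$-th Lipschitz constant coincides with that of $K$, and taking a union bound over the two events produces the stated two-sided inequality with the prefactor $2$.

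The main obstacle is already handled in the discussion preceding the statement: it consists in justifying the two inputs for $\mathfrak{C}_n$. The sub-Gaussian bound for the inner $\mathbb{P}_\epsilon$-integral is the classical bounded-difference / Hoeffding estimate for i.i.d. bounded random variables, while the outer $\mathbb{P}$-integral requires a Paulin-type concentration inequality for Markov chains whose hypotheses must be verified via the exponential decay of correlations in (\ref{dc}) together with the compactness of $\mathcal{L}$ on $BV$ from Remark \ref{kj}. Once those two ingredients are in place, the rescaling--Chernoff--optimize procedure above is entirely mechanical.
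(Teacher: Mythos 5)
Your proposal is correct and follows essentially the same route the paper intends: decompose $\mathfrak{C}_n$ by first integrating over $\mathbb{P}_{\epsilon}$ (i.i.d. concentration for the observational noise) and then over $\mathbb{P}$ (Paulin-type Markov chain concentration), then pass from the moment-generating-function bound to the tail bound. You correctly make explicit the $\lambda$-rescaling, Chernoff optimization at $\lambda^\star = t/(2DS)$, and union bound over $\pm K$, which the paper compresses into the phrase ``by applying Chebyshev's inequality'' without spelling out that the quadratic exponent requires the standard optimization over $\lambda$.
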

\subsection{Application: deviation of the empirical measure.} \label{EEMM} Take $\mathcal{G}$ the set of Lipschitz functions on $I_{\Gamma}$ with Lip constant at most $1.$ By ergodicity we know that 
$$
\lim_{n\rightarrow \infty}\frac1n\sum_{j=0}^{n-1}g(X_j+s_M\epsilon_j)=\iint d\mu d\psi g(x+s_M\epsilon)=:\int g(y)d\rho_Z(y),
$$
where $\rho_Z$ is the distribution introduced in section \ref{filll}.
Write the {\em empirical measure}:
$$
\mathcal{E}_n(\Omega, \epsilon):=\frac1n\sum_{j=0}^{n-1}\delta_{X_j(\Omega)+s_M\epsilon_j(\epsilon)},
$$
and then put, for two probability measures $\nu_1, \nu_2$ with support  on $I_{\Gamma}:$
$$
\kappa (\nu_1, \nu_2):=\sup_{g\in \mathcal{G}}\int gd\nu_1-\int gd\nu_2,
$$
the {\em Kantorovich} distance. We thus have, since now $K(z_1, \dots, z_n)=\sup_{g\in\mathcal{G}}\frac1n\sum_{j=0}^{n-1}g(z_j)-\int gd\rho_Z$ and $\text{Lip}_j(K)\le \frac1n:$
\begin{equation}\label{zet}
\mathbb{P}\times\mathbb{P}_{\epsilon}\left(\kappa (\mathcal{E}_n,\rho_Z)>t+\mathbb{E}_{\mathbb{P}\times\mathbb{P}_{\epsilon}}(\kappa (\mathcal{E}_n,\rho_Z)\right)\le \exp\left(-\frac{nt^2}{4[c_{\epsilon}s_M^2+c_X]}\right).
\end{equation}
As expected the presence of the heteroscedastic factor $s_M$ makes the rate of convergence slower.
Notice that this result is in principle valid if we compare the empirical measure with {\em any} other probability measure, but it becomes  meaningful by using the stationary distribution $\rho_Z$ just because the quantity $\mathbb{E}_{\mathbb{P}\times\mathbb{P}_{\epsilon}}(\kappa (\mathcal{E}_n,\rho_Z))$ is exponentially small. 
\section{Extreme value theory}\label{EEVVTT}
\subsection{The extreme value distribution}\label{nb}
Let us consider  a measurable  function $\phi:I_{\Gamma}\rightarrow \mathbb{R}\cup\pm \{\infty\},$ 
the process $Z_n$ defined in (\ref{rfu}) and construct the new process $\phi\circ Z_n$ as
$
\phi(T^{n}_{\overline{\eta}}x+s(T^{n}_{\overline{\eta}}x)\epsilon_{n}).
$
We are interested in the {\em extreme value distribution}:
$$
W_n=\mathfrak{P}(M_n\le u_n),
$$
where 
$$
M_n=\max_{0\le j \le n-1}\{\phi(T^{j}_{\overline{\eta}}x+s(T^{j}_{\overline{\eta}}x)\epsilon_{j})\}.
$$
Let us introduce the set, which we will call "ball" for reason which will become evident later on:
$$
B_n:=\{\phi\ge u_n\},
$$
and \\
{\bf Assumption B}: suppose that the diameter of $B_n$  goes to zero when $n\rightarrow \infty$ (this is clearly an assumption on $\phi$ as well).\\

We defer to \cite{book} for a discussion of the choice of  observable $\phi$  and their relationship with the three extreme value distributions: {\em Gumbel}, {\em Weibull} and {\em Fr\'echet}.\\

Define also the quantity
$$
\frak{n}_{B_n}:=\inf\{j\ge 1; T^{j}_{\overline{\eta}}x+s(T^{j}_{\overline{\eta}}x)\epsilon_{j}\in B_n\},
$$
which gives the {\em first hitting time} to the ball $B_n.$ By stationarity of $\mathfrak{P},$ it is easy to show that 
\begin{equation}\label{VIP}
\mathfrak{P}(\mathfrak{n}_{B_n}>n)=W_n=\mathfrak{P}(M_n\le u_n),
\end{equation}
which establishes an important link between the law of extremes and the hitting time in small sets.
We  begin to compute the quantity
\begin{equation}\label{fevt}
W_n=\mathfrak{P}(Z_0\in B_n^c, Z_1\in B_n^c,\dots Z_{n-1}\in B_n^c),
\end{equation}
as
\begin{align*}
W_n&=\iiint h(x)1_{B_n^c}(x+s(x)\epsilon_0)1_{B_n^c}(T_{\overline{\eta}}x+s(T_{\overline{\eta}}x)\epsilon_1)\dots \\
&\quad\quad\dots 1_{B_n^c}(T^{n-1}_{\overline{\eta}}x+s(T^{n-1}_{\overline{\eta}}x)\epsilon_{n-1})dx d\tilde{\mathbb{P}}(\overline{\eta})d\tilde{\mathbb{P}}(\overline{\epsilon}),\label{mevt}
\end{align*}
where $h$ is the density of $\mu.$
Since the $\epsilon_k$ are independent, we continue as:
\begin{align*}
W_n&=\iint h(x) \int 1_{B_n^c}(x+s(x)\epsilon_0)d\psi (\epsilon_0)\int 1_{B_n^c}(T_{\overline{\eta}}x+s(T_{\overline{\eta}}x)\epsilon_1)d\psi(\epsilon_1)\dots \\
&\quad\dots\int 1_{B_n^c}(T^{n-1}_{\overline{\eta}}x+s(T^{n-1}_{\overline{\eta}}x)\epsilon_{n-1})d\psi (\epsilon_{n-1})d\tilde{\mathbb{P}}(\overline{\eta})dx.
\end{align*}
In order to handle with the previous quantity we now introduce the following operator, for $f$ in our Banach space $\mathcal{B}$:
\begin{align*}
\hat{\mathcal{L}_n}f(x)&:=\mathcal{L}\left(f(\cdot) \int 1_{B_n^c}((\cdot)+s(\cdot)\epsilon)d\psi (\epsilon)\right)(x)\\
&=\int \mathcal{L}_{\eta}\left(f(\cdot) \int 1_{B_n^c}((\cdot)+s(\cdot)\epsilon)d\psi (\epsilon)\right)(x)d\Theta(\eta),
\end{align*}
where $\mathcal{L}_{\eta}$ is the transfer operator of $T_{\eta}.$
We can now check that
\begin{equation}\label{vj}
W_n=\int \hat{\mathcal{L}}_n^n h(x) dx.
\end{equation}
We now show that $\hat{\mathcal{L}_n}$ is a perturbation of $\mathcal{L}$ for $n\rightarrow \infty$ 
 when the "size" of the ball suitably defined by the quantity $\Delta_n$ below  converges to zero.   Put now
$$
\int 1_{B_n^c}(x+s(x)\epsilon)d\psi (\epsilon)=\psi\left(\frac{B^c_n-x}{s(x)}\right).
$$
The "distance" between the two operators is given by
$$
\Delta_n:=\int (\mathcal{L}-\hat{\mathcal{L}_n})hdx=\int \psi\left(\frac{B_n-x}{s(x)}\right)d\mu(x).
$$

We now apply the Keller-Liverani theory \cite{KL1,KL2}\footnote{We also refer to chapter 7 of the book \cite{book} and to the monograph \cite{last} for the details of such a theory.}. In order to apply it, we need the compactness of the unperturbed operator $\mathcal{L},$ which we already established, and also the Lasota-Yorke (LY) inequality for the perturbed operator $\hat{\mathcal{L}_n}$. This follows the same proof of the LY inequality for $\mathcal{L}$ in \cite{lillo21} by using the property of the Markov kernel $p$  established after eq. (\ref{MMKK}).   Moreover, having defined
$$
\pi_n=\sup_{f\in \mathcal{B}, \|f\|_{\mathcal{B}}\le1}\left|\int (\mathcal{L}-\hat{\mathcal{L}_n})fdx\right|,
$$
we should prove that $\pi_n$ goes to zero for $n\rightarrow \infty$ and moreover there is a constant $c_{\pi}$ such that
\begin{equation}\label{poi}
\pi_n \|(\mathcal{L}-\hat{\mathcal{L}_n})h\|_{\mathcal{B}}\le c_{\pi}|\Delta_n|.
\end{equation}
But $\left|\int (\mathcal{L}-\hat{\mathcal{L}_n})fdx\right|=\left|\int f(x)\left( \int 1_{B_n}(x+s(x)\epsilon)d\psi (\epsilon)\right)dx\right|\le \|f\|_{\mathcal{B}} \int \psi\left(\frac{B_n-x}{s(x)}\right)dx.$ We now make the following:\\
{\bf Assumption S:} We ask that:
$$
\lim_{n\rightarrow \infty}    \int \psi\left(\frac{B_n-x}{s(x)}\right)dx=0. 
$$
This assumption will guarantee that $\pi_n\rightarrow 0$ and moreover it will allow us to establish (\ref{poi}), provided the density $h$ of the stationary measure is strictly positive on the set $\{x; x+s(x)\epsilon \in B_n\}.$\footnote{The term $\|(\mathcal{L}-\hat{\mathcal{L}_n})h\|_{BV}$ in (\ref{poi}) is in fact bounded by a constant by the LY.}
We therefore get that the top eigenvalue $\iota$ of $\hat{\mathcal{L}_n}$ differs from $1,$ which is the top eigenvalue of $\mathcal{L}$ by
$$
1-\iota= \beta\Delta_n+o(\Delta_n), 
$$
where   $\beta$ is the extremal index
given by 
$$
\beta=1-\sum_{k=0}^{\infty}q_k,
$$
where
$$
q_k:=\lim_{n\rightarrow \infty}\frac{\int(\mathcal{L}-\hat{\mathcal{L}_n}) \hat{\mathcal{L}}_n^k(\mathcal{L}-\hat{\mathcal{L}_n})(h)dx}{\Delta_n},
$$
provided the limit exists.
We now introduce the threshold condition:\\

{\bf Assumption S'}: When $n\rightarrow \infty$ there exists a positive number $\tau$ such that 
\begin{equation}\label{ss}
 \ n \Delta_n\rightarrow \tau,
\end{equation}Since the top eigenvalue of $\hat{\mathcal{L}}_n^n$ behaves as $\iota^n$ and using the scaling (\ref{ss}), by replacing in (\ref{vj}) we finally   get:
\begin{proposition}(Extreme Value Distribution)
Let us suppose that our Markov system verifies {\bf Assumption TM} and the observational noise satisfies {\bf Assumptions S, S'}. Moreover suppose that the extremal index $\beta>0.$ Then we have
$$
W_n\rightarrow e^{-\beta \tau}, \ n\rightarrow \infty.
$$
\end{proposition}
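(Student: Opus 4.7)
The plan is to apply the Keller--Liverani spectral perturbation framework exactly as sketched in the paragraphs preceding the statement, and then to combine the first-order asymptotic for the top eigenvalue of $\hat{\mathcal{L}}$ with the normalization coming from Assumption S'. The starting point is the representation $W_t = \int \hat{\mathcal{L}}^t h\,dx$ in (\ref{vj}).

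\textbf{Step 1 (Verify the Keller--Liverani hypotheses).} I would first check, on the pair $(BV,L^1)$, the four standard ingredients of \cite{KL1, KL2}: compactness of the unperturbed operator $\mathcal{L}$ (Remark \ref{kj}); a uniform Lasota--Yorke inequality for $\hat{\mathcal{L}}$ (the multiplication by the $[0,1]$-valued function $x \mapsto \psi((B_t^c-x)/s(x))$ only decreases the total variation, so the argument of \cite{lillo21} applies verbatim); the convergence $\pi_t\to 0$, which is immediate from Assumption S, since for $f\in BV$ with $\|f\|_{BV}\le 1$ one has $|\int(\mathcal{L}-\hat{\mathcal{L}})f\,dx|\le \|f\|_\infty \int \psi((B_t-x)/s(x))\,dx$; and finally the comparison (\ref{poi}) between $\pi_t$ and $|\Delta_t|$, which follows from Assumption S together with the strict positivity of $h$ on $\{x:\, x+s(x)\epsilon\in B_t\}$ (this is how the stationary measure converts a Lebesgue integral into a $\mu$-integral up to a multiplicative constant).

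\textbf{Step 2 (Spectral decomposition and eigenvalue expansion).} Keller--Liverani then yields, for all sufficiently large $t$, a decomposition
\[
\hat{\mathcal{L}} = \iota_t\, \Pi_t + R_t,
\]
where $\Pi_t$ is a rank-one projector close in operator norm to $\Pi_0(\cdot)=h\int(\cdot)\,dx$, $|1-\iota_t|=O(\pi_t)=o(1)$, and $R_t$ has spectral radius uniformly bounded by some $r<1$. The standard first-order resolvent expansion (see also the exposition in \cite{book, last}) then gives
\[
1-\iota_t \;=\; \Delta_t\Bigl(1-\sum_{k\ge 0}q_k\Bigr) + o(\Delta_t) \;=\; \beta\,\Delta_t + o(\Delta_t),
\]
with $q_k$ as defined just before the statement.

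\textbf{Step 3 (Taking the limit).} Iterating the decomposition,
\[
\int \hat{\mathcal{L}}^t h\,dx \;=\; \iota_t^{\,t}\int \Pi_t h\,dx \;+\; \int R_t^{\,t} h\,dx,
\]
the remainder is $O(r^t)\to 0$, and $\int \Pi_t h\,dx \to \int h\,dx =1$ because $\Pi_t\to\Pi_0$ and $h$ is a probability density. Using Assumption S', $t\Delta_t\to\tau$, so
\[
\iota_t^{\,t} \;=\; \exp\bigl(t\log(1-\beta\Delta_t+o(\Delta_t))\bigr) \;=\; \exp\bigl(-t(\beta\Delta_t+o(\Delta_t))\bigr) \;\longrightarrow\; e^{-\beta\tau}.
\]
Combined with (\ref{vj}) and (\ref{VIP}), this yields $W_t\to e^{-\beta\tau}$.

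\textbf{Main obstacle.} The subtle point is the identification of the first-order eigenvalue coefficient with $\beta=1-\sum_k q_k$. This requires expanding the resolvent of $\hat{\mathcal{L}}$ around that of $\mathcal{L}$ at $1$, picking out the leading linear term in $\Delta_t$, and recognizing that the telescoping contributions from $(\mathcal{L}-\hat{\mathcal{L}})\hat{\mathcal{L}}^k(\mathcal{L}-\hat{\mathcal{L}})$ supply exactly the coefficients $q_k \Delta_t$ to leading order; the existence of the limit in the definition of $q_k$ is precisely what allows this resummation. A secondary technical nuisance is controlling $\int \Pi_t h\,dx$ uniformly; this is where the strict positivity of $h$ on the support of the perturbing indicator, invoked after Assumption S, plays its role.
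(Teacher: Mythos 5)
Your proposal is correct and follows essentially the same route as the paper: apply the Keller--Liverani perturbation theory to $\hat{\mathcal{L}}$ on $(BV,L^1)$ (compactness, LY inequality, $\pi_t\to 0$, and the comparison (\ref{poi})), obtain $1-\iota_t=\beta\Delta_t+o(\Delta_t)$, and combine $W_t=\int\hat{\mathcal{L}}^t h\,dx$ with the scaling $t\Delta_t\to\tau$ from Assumption~S' to get $\iota_t^{\,t}\to e^{-\beta\tau}$. You merely make explicit the rank-one plus remainder decomposition and the convergence $\int\Pi_t h\,dx\to 1$ that the paper leaves implicit; the substance of the argument is identical.
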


\begin{remark}
   Assumptions {\bf S}  and {\bf S'} requires that the two integrals $ \int \psi\left(\frac{B_n-x}{s(x)}\right)dx,     \int \psi\left(\frac{B_n-x}{s(x)}\right)d\mu(x)$ go to zero when $n\rightarrow \infty.$ An easy way to check them is to consider $s(x)=constant.$ In this case the first integral above is simply bounded by the Lebesgue measure of the ball $B_n$ (by the translation invariance of the Lebesgue measure), while the second integral is bounded in the same way since $h\in L^{\infty},$ then we use {\bf Assumption  B}. In this setting we also require that $h$ is strictly bounded from below on the set $B_n-s\epsilon.$
\end{remark}
\subsection{The extremal index}
Using the definition of the various operators, it is straightforward to check that
$$
q_k=
$$
$$
\lim_{n\rightarrow \infty}\frac{\int d\mu(x)d\tilde{\mathbb{P}}(\underline{\eta})d\tilde{\mathbb{P}}(\underline{\epsilon})1_{B_n}(x+s(x)\epsilon_0)1_{B_n^c}(T_{\overline{\eta}}x+s(T_{\overline{\eta}}x)\epsilon_1)\dots 1_{B_n^c}(T^{k}_{\overline{\eta}}x+s(T^{k}_{\overline{\eta}}x)\epsilon_{k})1_{B_n}(T^{k+1}_{\overline{\eta}}x+s(T^{k+1}_{\overline{\eta}}x)\epsilon_{k+1}) }{\Delta_n}.
$$
We conjecture that all the $q_k=0,$ the contrary could happen if the ball  returns to  itself which is quite unlikely due to the presence of two noises. There is an easy interpretation of $q_k:$ it weights occurrences which start in $B_n,$ spend $k$ times outside $B_n$ and return in $B_n$ at the $k+1$ time.
There is however a case where we could prove that all the $q_k$ are zero:\\

{\bf Assumption EI}: we take  $s$ as a constant, equal to  $1$ to simplify the next considerations and we will also suppose  that the measure $\psi$ is not atomic. \\

Then we will  assume the point of view of the Markov chain which we already used in section \ref{CI}. In this case and using the notations of that section we see that the denominator $\frak{d}_k$ in the expression for  $q_k$ could be written as
$$
\frak{d}_k=\mathbb{P}\times\mathbb{P}\left(X_0+\epsilon_0\in B_n, X_1+\epsilon_1\in B^c_n, \dots X_k+\epsilon_k\in B^c_n, X_{k+1}+\epsilon_{k+1}\in B_n\right)
$$
We integrate now with respect to the first factor by considering
$$
\frak{d}'_k:=\mathbb{P}\left(X_0\in B_n-\epsilon_0, X_1\in B^{c}_n-\epsilon_1, \dots X_k\in B^{c}_n-\epsilon_k, X_{k+1}\in B_n-\epsilon_{k+1}\right).
$$
As in eq. (39) section 6 in (\cite{lillo22}), we have now
$$
\frak{d}'_k\le \int 1_{B_n-\epsilon_{k+1}}(x_{k+1})p(x_k, x_{k+1})dx_{k+1}\int dx_k\mathcal{L}^{k}(h1_{ B_n-\epsilon_{0}})(x_k),
$$
where $p(\cdot, \cdot)$ is the Markov kernel and we showed in \cite{lillo22} that $p$ is uniformly bounded by a constant $C_p.$ Then
$$
\frak{d}'_k\le C_p \text{Leb}(B_n-\epsilon_{k+1})\mu( B_n-\epsilon_{0}).
$$
By integrating with respect to the second factor measure $\mathbb{P}_{\epsilon}$ we  get
$$
\frak{d}_k\le C_p \int dx \psi(B_n-x)\int d\mu(y)\psi(B_n-y).
$$
By dividing with $\Delta_n$ we finally get
$$
\frac{\frak{d}_k}{\Delta_n}\le \frac{C_p\int dx \psi(B_n-x)\int d\mu(y)\psi(B_n-y)}{\int d\mu(y)\psi(B_n-y)},
$$
and when $n\rightarrow \infty$ the ratio goes to zero proving that $q_k=0.$
We have thus shown
\begin{proposition}\label{pp1}
Let us suppose that our Markov system verifies {\bf Assumption TM} and the observational noise satisfies {\bf Assumptions EI}. Moreover suppose the observable $\phi$ enjoys {\bf Assumption B}; then 
$$
W_n\rightarrow e^{- \tau}, \ n\rightarrow \infty.
$$
\end{proposition}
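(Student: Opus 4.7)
The plan is to invoke the Gumbel's law proposition proved in the previous subsection, which gives $\rho_Z(W_t)\to e^{-\beta\tau}$, and then show that under Assumption EI the extremal index satisfies $\beta=1$; equivalently, that $q_k=0$ for every $k\ge 0$. Since $\beta = 1-\sum_k q_k$, vanishing of all the $q_k$ collapses the stretched exponential to $e^{-\tau}$.

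First I would verify that Assumptions S and S' remain available under EI combined with B. With the constant modulation $s\equiv 1$ and $\psi$ non-atomic, $\int \psi(B_t-x)\,dx$ equals, after a change of variable, the $\psi$-convolution of $1_{B_t}$ integrated over a bounded set; since $\mathrm{diam}(B_t)\to 0$ by Assumption B and $\psi$ is non-atomic, this vanishes as $t\to\infty$. Because $h\in L^\infty$ by Remark \ref{kj}, the same estimate dominates $\Delta_t=\int\psi(B_t-x)\,d\mu(x)$, so S holds. Assumption S' is then just a threshold-calibration statement: choose $u_t$ so that $t\Delta_t\to\tau$.

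The substantive step is estimating the numerator of $q_k$. Adopting the Markov chain viewpoint of Section \ref{CI}, the iterated operator expression equals
\[
\mathfrak d_k=\mathbb P\times\mathbb P_\epsilon\bigl(X_0+\epsilon_0\in B_t,\ X_1+\epsilon_1\in B_t^c,\dots,X_k+\epsilon_k\in B_t^c,\ X_{k+1}+\epsilon_{k+1}\in B_t\bigr).
\]
Conditioning first on $\overline\epsilon$, I would drop the intermediate constraints $X_j+\epsilon_j\in B_t^c$ (an upper bound), split off the last step using the uniform kernel bound $\sup_x |p(x,\cdot)|\le C_p$ from Remark \ref{kj}, and recognize that the remaining middle factor is $\int\mathcal L^{k}(h\,1_{B_t-\epsilon_{k+1}})(x_k)\,dx_k=\mu(B_t-\epsilon_{k+1})$. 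This yields the decoupled bound $\mathfrak d_k\le C_p\,\mathrm{Leb}(B_t-\epsilon_0)\,\mu(B_t-\epsilon_{k+1})$. Integrating over $\mathbb P_\epsilon$ gives $\mathfrak d_k\le C_p\bigl(\int\psi(B_t-x)\,dx\bigr)\cdot\Delta_t$, so dividing by $\Delta_t$ reduces the ratio to $C_p\int\psi(B_t-x)\,dx$, which tends to $0$ by the argument in the preceding paragraph. Hence $q_k=0$ for every $k\ge 0$, $\beta=1$, and the proposition follows.

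The main obstacle I expect is making the decoupling step uniform in the random shifts $\epsilon_0,\epsilon_{k+1}$: the kernel bound from Remark \ref{kj} is stated on $I_\Gamma\times I_\Gamma$, so one must check that the shifted balls $B_t-\epsilon_j$ are eventually contained in $I_\Gamma$ for admissible $\epsilon$, which follows for large $t$ from Assumption B together with $B_t\subset I_\Gamma$ and the smallness condition \eqref{56}. A secondary, routine point is that the definition of $q_k$ requires an honest limit rather than a $\limsup$; this is immediate here because our upper bound is $o(1)$, forcing the limit itself to be $0$.
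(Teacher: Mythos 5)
Your proposal is correct and follows essentially the same route as the paper: invoke the Gumbel law with extremal index $\beta$, then show $q_k=0$ for all $k$ by passing to the Markov chain viewpoint, conditioning on the observational noise, discarding the intermediate constraints, applying the uniform kernel bound to decouple, integrating over $\mathbb P_\epsilon$, and cancelling the $\Delta_t$ factor so that the remaining term $C_p\int\psi(B_t-x)\,dx$ tends to $0$. The only additions over the paper's own argument are your explicit verification that Assumptions S and S$'$ survive under EI (the paper covers this in a remark) and your check that the shifted balls stay in $I_\Gamma$ so the kernel bound applies — both reasonable clarifications, not deviations.
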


  In the context of observational noise perturbing a deterministic dynamical system, a result similar to Proposition \ref{pp1} was proved in \cite{FV}.


             \section{Poisson statistics}\label{PPSS}
Using the spectral approach recently proposed in \cite{AFGV} we can now get the distribution of the number of visits of our random orbits, even perturbed with observational noise, in the set $B_n$ verifying {\bf Assumption B}  and in a suitable renormalized interval of time. We will argue  that such a distribution is a standard Poisson.\\
The starting point is therefore  to compute the following distribution
\begin{equation}\label{PS}
\mathfrak{P}\left(\sum_{i=0}^{n'}1_{B_n}(T_{\overline{\eta}}^i x+s(T_{\overline{\eta}}^i x)\epsilon_i)=k\right),
\end{equation}
where  
\begin{equation}\label{rez}
n':=\left\lfloor{\frac{\tau}{\int \psi((B_n-x)/s(x))d\mu(x)}}\right\rfloor.
\end{equation}
Let us call $S_{n'}$ the quantity inside the parenthesis in (\ref{PS}): we will compute its distribution.
In particular 
we will compute its characteristic function $\Phi(S_n')$ given by
$$
\Phi_{S_{n'}}(\lambda)=\int e^{i\lambda S_{n'}}d\mathfrak{P}.
$$
Let us now introduce a new operator $\tilde{\mathcal{L}_n}$ defined as, on functions $f$ belonging to our Banach space of bounded variation functions:
$$
\tilde{\mathcal{L}_n}(f)(x)=\mathcal{L}(f \int e^{i\lambda(1_{B_n}(\cdot+s(\cdot)\epsilon)}d\psi(\epsilon))(x).
$$
Notice again that this operator is a small perturbation of $\mathcal{L}$ and therefore we could proceed with the perturbative technique as before. It is now straightforward to check that
$$
\Phi_{S_{n'}}(\lambda)=\int \tilde{\mathcal{L}}_n^{n'}(f)(x) d\mu(x).
$$
The top eigenvalue $\upsilon$ of $\tilde{\mathcal{L}_n}$ differs from $1,$ which is the top eigenvalue of $\mathcal{L}$ by
$$
1-\upsilon= \l(\lambda)\Delta'_n+o(\Delta_n'),
$$
where $\Delta'_n$ denotes as before the distance between the two operators and is given by
$$
\Delta_n':=\int (\mathcal{L}-\tilde{\mathcal{L}_n})hdx=(1-e^{i\lambda})\int \psi((B_n-x)s(x)^{-1})d\mu(x)=(1-e^{i\lambda})\Delta_n;
$$
moreover 
$$
l(\lambda)=1-\sum_{k=0}^{\infty}q'_k(\lambda),
$$
and 
$$
q'_k(\lambda):=\lim_{n\rightarrow \infty}\frac{\int(\mathcal{L}-\tilde{\mathcal{L}_n}) \tilde{\mathcal{L}}_n^k(\mathcal{L}-\tilde{\mathcal{L}_n})(h)dx}{\Delta'_n},
$$
provided the limit exists. In such a case, 
since $\Phi_{S_{n'}}(\lambda)$ grows like $\upsilon^{n'},$ by the scaling (\ref{rez})  on $n'$ we finally have:
$$
\Phi_{S_{n'}}(\lambda)\rightarrow e^{-l(\lambda)(1-e^{i\lambda})\tau}, \ n\rightarrow \infty.
$$
Supposing that $l(\lambda)$ is continuous at $0,$ we see that the right hand side of the previous equation is the characteristic function of a random variable $W$ which we could identify with the limit, in distribution, of $W_k.$ It turns out that $W$ is discrete and infinitely divisible, so that it is {\em compound Poisson distributed.} By the same argument used in the preceding section, we could prove that when $s=constant$   then   all the $q'_k(\lambda)=0$ and therefore 
 $l(\lambda)=1.$  We have therefore proved:
\begin{proposition}\label{pp2}
Let us suppose that our Markov system verifies {\bf Assumption TM} and the observational noise satisfies {\bf Assumptions EI}. Moreover suppose the observable $\phi$ enjoys {\bf Assumption B}. Then we  have that $l(\lambda)=1,$ and  $W$ has the standard Poisson distribution $$\mathcal{P}(\tau):=\frac{\tau^ke^{-\tau}}{k!}.$$ 
\end{proposition}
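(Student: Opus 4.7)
The plan is to show that under Assumption EI every coefficient $q'_k(\lambda)$ in the expansion $l(\lambda) = 1 - \sum_{k\ge 0} q'_k(\lambda)$ vanishes. Once this is done, the characteristic-function limit derived just above in the text becomes $\Phi_{S_{t'}}(\lambda)\to e^{(e^{i\lambda}-1)\tau}$, which is precisely the characteristic function of Poisson$(\tau)$; by L\'evy's continuity theorem this identifies $W$ as $\mathcal{P}(\tau)$. The argument parallels the extremal-index proof for $q_k=0$ in Proposition~\ref{pp1}, but now one must carry along the complex phases coming from the integrals $\int e^{i\lambda\mathbf{1}_{B_t}(\cdot+\epsilon)}d\psi(\epsilon)$.

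First I would factor out the phase. Since $1 - e^{i\lambda\mathbf{1}_A(y)} = (1-e^{i\lambda})\mathbf{1}_A(y)$ for every set $A$, one immediately obtains
\[
(\mathcal{L} - \tilde{\mathcal{L}})(f) = (1-e^{i\lambda})\,\mathcal{L}\!\left(f\cdot \psi\!\left(\frac{B_t - \cdot}{s(\cdot)}\right)\right),
\]
so the numerator of $q'_k(\lambda)$ carries the factor $(1-e^{i\lambda})^2$ while $\Delta'_t=(1-e^{i\lambda})\Delta_t$ carries only one. After cancellation it suffices to show that, taking $s\equiv 1$ as in Assumption EI,
\[
\tilde q_k := \lim_{t\to\infty}\frac{1}{\Delta_t}\int \mathcal{L}\!\Big(\tilde{\mathcal{L}}^{k}\big(\mathcal{L}(h\,\psi(B_t-\cdot))\big)\cdot\psi(B_t-\cdot)\Big)\,dx
\]
tends to zero. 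The middle phases are innocuous: the factor $y\mapsto\int e^{i\lambda\mathbf{1}_{B_t}(y+\epsilon)}d\psi(\epsilon)$ is a convex combination of unit-modulus numbers and hence bounded by $1$, giving the pointwise domination $|\tilde{\mathcal{L}}^k g|\le \mathcal{L}^k|g|$. Using also $\int\mathcal{L}(\cdot)\,dx = \int(\cdot)\,dx$, this reduces the problem to estimating
\[
|\tilde q_k|\le \frac{1}{\Delta_t}\int \mathcal{L}^{k+1}\!\big(h\,\psi(B_t-\cdot)\big)(x)\,\psi(B_t-x)\,dx.
\]

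The right-hand side is nothing but $\Delta_t^{-1}\cdot\mathbb{P}[X_0+\epsilon_0\in B_t,\,X_{k+1}+\epsilon_{k+1}\in B_t]$ under the stationary law. Following the corresponding bound from the EVT section, I would use the uniform bound $p\le C_p$ on $I_\Gamma$ from Remark~\ref{kj} together with $L^1$-invariance of $\mathcal{L}$ to obtain $\|\mathcal{L}^{k+1}(h\mathbf{1}_{B_t-\epsilon_0})\|_\infty \le C_p\,\mu(B_t-\epsilon_0)$, uniformly in $k$. Integrating then in $\epsilon_0$ and $\epsilon_{k+1}$ against $\psi$, the translation invariance of Lebesgue yields $\int\psi(B_t-x)\,dx=\Leb(B_t)$, so
\[
\int \mathcal{L}^{k+1}\!\big(h\,\psi(B_t-\cdot)\big)(x)\,\psi(B_t-x)\,dx\;\le\;C_p\,\Leb(B_t)\,\Delta_t.
\]
Dividing by $\Delta_t$ leaves the bound $C_p\Leb(B_t)$, uniform in $k$ and vanishing as $t\to\infty$ by Assumption~B; hence $q'_k(\lambda)=0$ for every $k$ and $l(\lambda)=1$.

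The main obstacle I anticipate is the uniform-in-$k$ control in the previous step: one needs a pointwise bound on $\mathcal{L}^{k+1}(h\,\mathbf{1}_{\cdot})$ that does not degrade with $k$, which relies on the one-step kernel being uniformly bounded on $I_\Gamma$ rather than on the exponential decay of correlations (which would give a $k$-dependent estimate). A secondary nuisance is the bookkeeping of the phase factors: one must check that the modulus domination transfers cleanly through the mixed expression $\int(\mathcal L-\tilde{\mathcal L})\tilde{\mathcal L}^k(\mathcal L-\tilde{\mathcal L})(h)\,dx$, leaving behind exactly one factor of $|1-e^{i\lambda}|$ so that the vanishing estimate for $\tilde q_k$ forces $q'_k(\lambda)=0$ for every $\lambda$, not merely at $\lambda=0$.
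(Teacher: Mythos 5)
Your proof is correct and follows essentially the same route the paper takes (the paper only says ``by the same argument used in the preceding section'' for showing $q'_k(\lambda)=0$, and you have filled in exactly that argument). The two ingredients you isolate --- the factorization $1-e^{i\lambda\mathbf 1_{B_t}}=(1-e^{i\lambda})\mathbf 1_{B_t}$ so that one $(1-e^{i\lambda})$ cancels against $\Delta'_t=(1-e^{i\lambda})\Delta_t$, and the modulus domination $|\tilde{\mathcal L}^k g|\le\mathcal L^k|g|$ followed by the uniform one-step kernel bound $p\le C_p$ to obtain $|\tilde q_k|\le C_p\,\mathrm{Leb}(B_t)\to 0$ --- are precisely how the extremal-index bound $q_k=0$ in the preceding section transfers to the complex-phase case.
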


\appendix
\section{An application from finance}

As we said above, our perturbed map (\ref{rrr}) is linked to the concept of systemic risk in finance. If we rewrite it as a nonlinear autoregressive model as in \cite{lillo22}  
\begin{equation}\label{eq:differenceequation}
    \phi_n = T(\phi_{n-1}) + \sigma_{\n}(\phi_{n-1})Y_{n-1}, 
\end{equation}
we have that  $\phi_n$, $n \in \mathbb{N}_{\geq 1}$ is a sequence of real numbers in a bounded interval of $\mathbb{R}$, $T$ is a deterministic map on $[0,1]$ perturbed with the additive and heteroscedastic noise $\sigma_{\n}(\phi_{n-1})Y_{n-1}$, being $\n \in \mathbb{N}_{\geq 1}$ a parameter that modulates the intensity of the noise; $\n$ is such that one retrieves the deterministic dynamic as $\n \rightarrow \infty$. Finally, $Y_n$, $n \in \mathbb{N}_{\geq 1}$, is a sequence of independent and identically distributed (\textit{i.i.d.}) real-valued random variables defined on some filtered probability space. 
In this setting, $\phi_n$ in \eqref{eq:differenceequation} represents the suitably scaled financial leverage of a representative investor (a bank) that invests in a risky asset. At each point in time, the scaling is a linear function of the leverage itself. The bank's risk management consists of two components. First, the bank uses past market data to estimate the future volatility (the risk) of its investment in the risky asset. Second, the bank uses the estimated volatility to set its desired leverage. However, the bank is allowed maximum leverage, which is a function of its perceived risk because of the Value-at-Risk (VaR) capital requirement policy. More specifically, the representative bank updates its expectation of risk at time intervals of unitary length, say $(n, n + 1]$ with $n\in\mathbb{N}_{\geq 1}$, and, accordingly, it makes new decisions about the leverage. Moreover, the model assumes that over the unitary time interval $(n, n + 1]$ the representative bank re-balances its portfolio to target the leverage without changing the risk expectations. The re-balancing takes place in $\n$
sub-intervals within $(n, n+ 1]$.  In particular, the
considered model is a discrete-time slow-fast dynamical system.\\
It is interesting to quote precisely the  form of the map $T,$ which has an apparently complicated analytic structure, but is ultimately a unimodal map:
$$
\phi_n= T(\phi_{n-1})=\frac{A(\phi_{n-1})-1}{\gamma_0+cA(\phi_{n-1})},
$$
where 
$$
 A(u)\overset{\text{def}}{=}\frac{1+\gamma_0 u}{[\omega(1-c u)^2 + \overline{\Sigma}_{\epsilon}(1-u)^{-2}(1+\gamma_0 u)^2]^{1/2}},
$$
being $\gamma_0, \omega, c, \overline{\Sigma}_{\epsilon}$ parameters related to the financial model, see \cite{lillo22} for details and figure 1 for the graph of $T$ taken from our previous work \cite{lillo22}\footnote{The value of $\alpha$ given in Fig. 1 depends on the return distribution and VaR constraint entering the definition of $\phi_n,$ see \cite{lillo22} for more details.}.\\

 \begin{figure}
    \centering
    \includegraphics[scale=0.55]{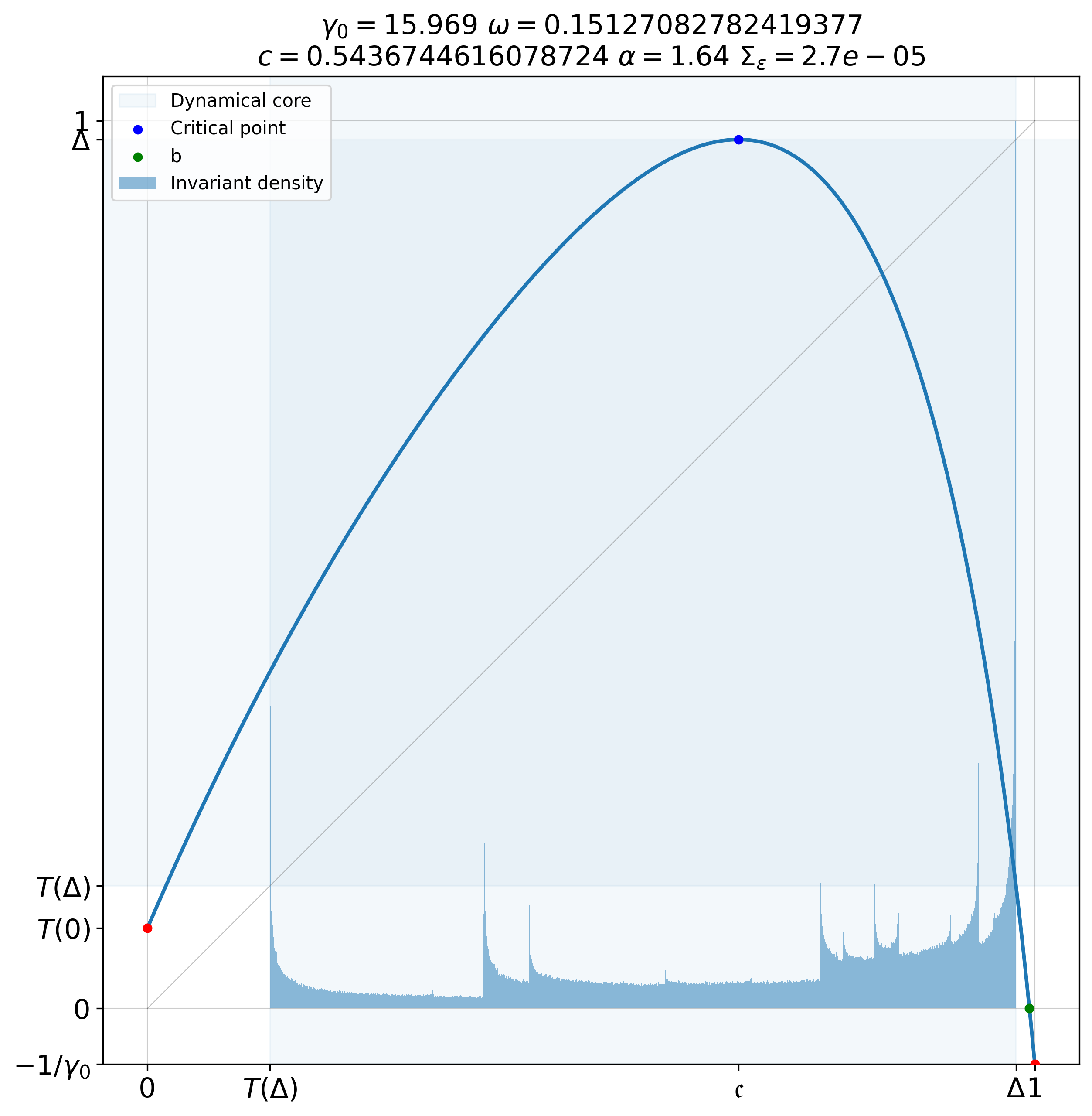}
    \caption[Deterministic Component]{Taken from \cite{lillo22}: plot of the deterministic component $T(\phi)$, $\gamma_0=15.969$, $\alpha=1.64$, $\Sigma_{\epsilon}=2.7\times 10^{-5}$. The value for $\gamma_0$ is taken from the empirical analysis in \cite{lillo21}, Section 7.2, (where it is denoted simply by $\gamma$) . The value $\alpha=1.64$ corresponds to a VaR constraint of $5\%$ in case of a Gaussian distribution for the returns. The values $\Sigma_{\epsilon}=2.7\times10^{-5}$ is taken from \cite{mazzarisi2019panic}, Table 1, and corresponds to the exogenous idiosyncratic volatility at the time scale of portfolio decisions. The value for $\Omega$ and $c$ are randomly sampled from the dynamical core, once fixed the other parameters. The \textit{Blue dot} indicates the critical point $\mathfrak{c}$, the \textit{Green dot} the intersection between the map and the horizontal axis, the left-hand \textit{Red dot} indicates the image of 0, the right-hand \textit{Red dot} indicates $\lim_{\phi\rightarrow1^{-}}T(\phi)=-\frac{1}{\gamma_0}$. The support of the invariant density belongs to the so-called \textit{dynamical core} $[T(\Delta),\Delta]$.} 
    \label{fig::deterministicmap}
\end{figure}

             \section*{Acknowledgement}
             The research of SV was supported by the project {\em Dynamics and Information Research Institute} within the agreement between UniCredit Bank and Scuola Normale Superiore di Pisa and by the Laboratoire International Associ\'e LIA LYSM, of the French CNRS and  INdAM (Italy).  SV was also supported by the project MATHAmSud TOMCAT 22-Math-10, N. 49958WH, du french  CNRS and MEAE. Finally SV thanks the Great Bay University in Dongguan (China), where this work was completed.
              F.L. and S.M. acknowledge partial support by the European Program scheme ‘INFRAIA-01-2018-2019: Research and Innovation action’, grant agreement \#871042 ’SoBigData++: European Integrated Infrastructure for Social Mining and Big Data Analytics’. MT ackwnoledges the hospitality of the Centro di Ricerca Matematica Ennio De Giorgi and Universit\'a di Pisa. 

\bibliographystyle{plain}
\bibliography{references}




\end{document}